\newtheorem{theo}{Theorem}[section]
\newtheorem{prop}[theo]{Proposition}
\newtheorem{coro}[theo]{Corollary}
\newtheorem{lem}[theo]{Lemma}
\newtheorem{defi}[theo]{Definition}
\newtheorem{re}[theo]{Remark}
\newtheorem{cla}[theo]{Claim}
\newcommand{\R}{\mathbb{R}}
\newcommand{\G}{\mathbb{G}}
\newcommand{\dis}{\displaystyle}
\newcommand{\dist}{\textrm{dist}}
\title[Horizontal gradient of polynomial functions]{Horizontal gradient of polynomial functions for the standard Engel structure on $\R^4$}         %
\author{Si Tiep DINH$^\dagger$}
\address{$^\dagger$Institute of Mathematics, 18 Hoang Quoc Viet Road, Cau Giay District, 10307, Hanoi, Vietnam}
\email{$^\dagger$dstiep@math.ac.vn}
\author{Krzysztof KURDYKA$^\ddagger$}
\address{$^\ddagger$Laboratoire de Math\'ematiques (LAMA) UMR-5127 CNRS, Campus Scientifique, 73376 Le Bourget-du-Lac cedex, France}
\email{$^\ddagger$Krzysztof.Kurdyka@univ-savoie.fr}
\thanks{This work was supported by the Fields Institute, ANR grant STAAVF (France), LIA Formath Vietnam Project and Vietnam National Foundation for Science and Technology Development (NAFOSTED) grant 101.01-2011.44.}
\keywords{Engel structure, horizontal gradient, horizontal curve, limit of trajectories}
\subjclass{2010 Mathematics Subject Classification. 14P10, 53C17, 58A30, 58Kxx,.}
\begin{document}
\maketitle

\begin{abstract}
We investigate the set $V_f$ of horizontal critical points of a polynomial function $f$ for the standard Engel structure defined by the 1-forms $\omega_3=dx_3-x_1dx_2,$ $\omega_4=dx_4-x_3dx_2$, endowed with the sub-Riemannian metric $g_{SR}=dx_1^2+dx_2^2$. For a generic polynomial, we show that the intersection of any fiber of $f$ and $V_f$ does not contain a horizontal curve. Then we prove that each trajectory of the horizontal gradient of $f$ approaching the set $V_f$ has a limit. 
\end{abstract}

\section{Introduction}

An {\bf Engel structure} $\Delta$ is a non-integrable distribution of rank $2$ on a $4$-dimensional manifold which satisfies the following conditions:
\begin{equation}
\begin{array}{lcccl}
$$rank[\Delta,\Delta]=3,$$ \\
$$rank[\Delta,[\Delta,\Delta]]=4,$$ \\
\end{array}
\end{equation}
where $[.,.]$ denotes the Lie bracket. Engel structures are stable (or generic) in the sense that all $C^2$-small perturbation of an Engel structure is still an Engel structure. A manifold with an Engel structure is called an {\bf Engel manifold}. In this paper, we will deal with the standard Engel structure on $\R^4$, defined by the 1-forms  $\omega_3=dx_3-x_1dx_2$ and $\omega_4=dx_4-x_3dx_2$, which is generated by the following vector fields:
\begin{equation}\label{dis}(\Delta)
\left \{\begin{array}{lcccl}
$$X_1 =\displaystyle\frac{\partial}{\partial x_1}$$ \\
$$X_2 =\displaystyle\frac{\partial}{\partial x_2}+x_1\frac{\partial}{\partial x_3}+x_3\frac{\partial}{\partial x_4}$$
\end{array}\right.\end{equation}
We denote by $\Delta$ this Engel structure. By a result of Engel \cite{Eng}, at each point of an Engel manifold, there exists a neighborhood with local coordinates $x_1, x_2, x_3, x_4$ such that the Engel structure is generated by the vector fields $X_1,X_2$ above. So locally, all Engel structures are isomorphic.

Let us fix $g_{SR}=dx_1^2+dx_2^2$, a {\bf sub-Riemannian metric} on $\Delta$ for which the system $\{X_1,X_2\}$ is orthonormalized. Let $\displaystyle X_3=\frac{\partial}{\partial x_3},X_4=\frac{\partial}{\partial x_4}$. Then $g=dx_1^2+dx_2^2+\omega_3^2+\omega_4^2$ is the Riemannian metric on $\R^4$ extending $g_{SR}$ and making the system $\{X_1,X_2,X_3,X_4\}$ orthonormalized. The pair $(\Delta,g_{SR})$ is called a {\bf sub-Riemannian structure} on $\R^4$.

We refer to \cite{AS, Bel,Cho,Ger,Gro,Jea,Stri,Sus,Vog,Zhi} for further informations about subriemannian geometry and Engel structures.

Let $\R_d[x]:=\{f\in\R[x],\deg f\leq d\}$, the set $\R_d[x]$ is furnished a structure of finite dimensional vector space. We may identify a polynomial $f\in\R_d[x]$ with its coefficient vector and identify $\R_d[x]$ with $\R^{dim\R_d[x]}$. So $\R_d[x]$ is endowed with the usual topology of $\R^{\dim\R_d[x]}$. For each $f\in\R_d[x]$, we can associate a vector field $\nabla^hf $, called {\bf horizontal gradient} of $f$, which is the projection of the Riemannian gradient $\nabla f$ on the distribution $\Delta$ with respect to the metric $g_{SR}$. In the coordinate system $\{X_1,X_2,X_3,X_4\}$, 
$$\displaystyle\nabla f=\sum_{i=1}^4(X_if)X_i.$$
By definition 
\begin{equation}\label{gh}\displaystyle\nabla^h f=(X_1f)X_1+(X_2f)X_2.
\end{equation}
For convenience,  we will sometimes identify $\nabla^hf$ with the map $x\mapsto (X_1f(x),X_2f(x))$. The notion of horizontal gradient appeared in some recent works \cite{BHT,Mag} on the Carnot groups. Denote
\begin{equation}
V_f=\{\nabla^h f=0\}
\end{equation}
{\bf the set of horizontal critical points} of $f$. Note that if $x\in V_f$, then either $x$ is a riemannian critical point of $f$, i.e., $\nabla f(x)=0$, or $\Delta_x\subset T_xf^{-1}(f(x))$.

\begin{defi} An almost everywhere differentiable curve $\gamma:(t_1,t_2)\rightarrow\R^4$ is called a horizontal curve if its derivative $\dot\gamma(t)$ is almost everywhere contained in $\Delta_{\gamma(t)}$.
\end{defi}

Let us firstly recall the \L ojasiewicz inequality \cite{BR,Cos,Loj2,Loj3,Loj} for Riemannian gradient $\nabla f$. This inequality is an important tool to study the trajectories of $\nabla^hf$. Assume that $f$ is an analytic function and $\nabla f(x_0)=0$, then the \L ojasiewicz inequality says that in a neighborhood $U_0$ of $x_0$,
\begin{equation}\label{LI} ||\nabla f(x)|| \geq C|f(x) - f(x_0)|^\alpha\end{equation}
for some $0<\alpha<1$ and $C>0$. As a consequence, in \cite{Loj3,Loj4}, \L ojasiewicz proved that locally the length of each trajectory $x(t)$ of $\nabla f$ is bounded uniformly, hence the limit $\displaystyle\lim_{t\rightarrow\infty}x(t)$ exists, i.e., if $x(t_m)\rightarrow x_0$ for some sequence $t_m\rightarrow\infty$, then $x(t)\rightarrow x_0$ as $t\rightarrow\infty$.

In \cite{DKO}, we have consider the trajectories of the horizontal gradient for a class of codimension one distribution, called splitting distribution, on $\R^n$. This class contains, in particular, contact structures. We have noticed that the \L ojasiewicz inequality does not hold for horizontal gradient and some new phenomena appear, for example, a trajectory of the horizontal gradient may have unbounded length or accumulate to a cycle. This presents a major obstruction to the study of the horizontal gradient's trajectories since the techniques of \L ojasiewicz do not apply to the horizontal gradient. It turns out that to overcome this difficulty, we have to study the restriction of the function on its set of horizontal critical points $f|_{V_f}$. The conclusion is that for a generic polynomial, the restriction $f|_{V_f}$ is a Morse function and the behavior of the trajectories of horizontal gradient is similar to those of Riemannian gradient.

Dealing with the standard Engel structure $\Delta$ above, in general, the \L ojasiewicz inequality does not hold for horizontal gradient. In fact, if $V_f\not=\emptyset$ and $f|_{V_f}$ is a non constant  function, then we can not have the \L ojasiewicz inequality. Now studying $f|_{V_f}$ is not sufficient to understand the behavior of the trajectories of $\nabla^hf$, and it turns out that the horizontal curves contained in $V_f$ play an important role in the study of the trajectories of $\nabla^hf$.

The paper is organized as follow. In the second section, we recall some basic notions and results of semi-algebraic geometry. In the third part, we study the generic properties of the dimension of $V_f$. On the generic dimension of $V_f$, we obtain the following.

\medskip\noindent\textbf{Theorem \ref{dimension}.}\textit{ Generically, $V_f$ is smooth and of dimension $2$ or is empty, i.e., the set
$$K_d=\{f\in\R_d[x]:V_f \text{ is smooth of dimension } 2 \text{ or is empty}\}$$
contains a semi-algebraic open dense set in $\R_d[x]$.}

The proof of this result is based on the transversality theorem with parameters. It's quite similar to the proof of Theorem 4.4 in \cite{DKO}.

The following result plays an important role in the study of the trajectories of horizontal gradient.

\medskip\noindent\textbf{Theorem \ref{noncontient}.}\textit{ Generically, the set $V_f\cap f^{-1}(t)$ does not contain a horizontal curve, i.e., the set
$$M_d=\{f\in\R_d[x]: V_f\cap f^{-1}(t) \text{ does not contain a horizontal curve for every }t\}$$
contains a semi-algebraic open dense set in $\R_d[x]$.}

In the last part, we show that

\medskip\noindent\textbf{Theorem \ref{havelimit}.}\textit{ Generically, the trajectories of $\nabla^hf$ in a compact set $B$ have a limit, i.e., the set
$$N_d=\{f\in\R_d[x]: \text{ the trajectories of }\nabla^hf \text{ have a limit on } \partial B\cup V_f\}$$
contains a semi-algebraic open dense set in $\R_d[x]$.}


\section{Briefs on transversality and semi-algebraicity}

\hskip 0.5cm We recall some principal definitions and results of transversality and semi-algebraicity. We refer to \cite{GM,GP,Hir,BR,BCR,Cos,KOS,Loj2,Loj3} for more details. Let $X,Y,Z,P$ be some $C^\infty$ manifolds. Let $S$ be a $C^\infty$ submanifold of $Y$. Let $f:X\rightarrow Y$ be a $C^\infty$  map. Denote $C^\infty(X,Y)$, the set of $C^\infty$ maps from $X$ to $Y$. 

\begin{defi} We say that $f$ is transverse to $S$ if $f(X)\cap S=\emptyset$ or for each $x\in f^{-1}(S)$, we have
$$d_xf(T_x X)+T_{f(x)}S=T_{f(x)}Y.$$
\end{defi}

Note that if $f$ is a submersion, then it is transverse to all submanifolds of $Y.$

\begin{prop}\label{codim} \cite{GP} If $f\pitchfork S$, then the inverse image $f^{-1}(S)$ of $S$ is a submanifold of $X$. Moreover, if $codim_Y S\leq dim X$ and $f(X)\cap S\not=\emptyset$, then
$$codim_X f^{-1}(S)=codim_Y S.$$
\end{prop}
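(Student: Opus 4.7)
The plan is to reduce the claim to the regular value theorem by expressing $S$ locally as the zero level set of a submersion and pulling it back by $f$; the transversality hypothesis is then exactly what makes the pullback remain a submersion on $X$.

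First, I would fix a point $x_0\in f^{-1}(S)$, set $k=\mathrm{codim}_Y S$, and invoke the standard local straightening of a submanifold to produce an open neighborhood $V$ of $f(x_0)$ in $Y$ together with a $C^\infty$ submersion $\pi:V\to\R^k$ satisfying $S\cap V=\pi^{-1}(0)$ and $\ker d_{f(x_0)}\pi=T_{f(x_0)}S$. Setting $U=f^{-1}(V)$, consider the composition $g:=\pi\circ f:U\to\R^k$, which has the property that $g^{-1}(0)=f^{-1}(S)\cap U$.

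The heart of the argument is to check that $g$ is a submersion at $x_0$. Since $d_{x_0}g=d_{f(x_0)}\pi\circ d_{x_0}f$ and $\ker d_{f(x_0)}\pi=T_{f(x_0)}S$, applying the surjection $d_{f(x_0)}\pi$ to the transversality identity
$$d_{x_0}f(T_{x_0}X)+T_{f(x_0)}S=T_{f(x_0)}Y$$
yields $d_{f(x_0)}\pi(d_{x_0}f(T_{x_0}X))=\R^k$, that is, surjectivity of $d_{x_0}g$. By openness of the submersion condition, $g$ remains a submersion on some neighborhood $U'\subset U$ of $x_0$, and the submersion theorem identifies $f^{-1}(S)\cap U'$ as a $C^\infty$ submanifold of $X$ of codimension $k$. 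Covering $f^{-1}(S)$ by such charts yields the first assertion, while the codimension formula is built into the construction.

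As for the main obstacle, there really is none beyond the local straightening of $S$, which is a standard fact. The dimensional hypothesis $k\leq\dim X$ is actually forced by the other assumptions, since transversality together with $f(X)\cap S\neq\emptyset$ already implies $\dim X+\dim S\geq\dim Y$; it is stated in the proposition only to make the conclusion substantive. In one line, the whole proof is: \emph{composing with a defining submersion of $S$ converts ``$f$ transverse to $S$'' into ``$0$ a regular value of $\pi\circ f$''}.
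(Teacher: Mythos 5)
Your argument is correct and complete: it is the standard reduction of transversality to the regular value theorem via a local defining submersion for $S$, which is exactly the proof given in the cited source \cite{GP}; the paper itself states Proposition \ref{codim} without proof. Your side remark that $\mathrm{codim}_Y S\leq\dim X$ is automatic once $f\pitchfork S$ and $f(X)\cap S\neq\emptyset$ is also accurate.
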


If $codim_Y S>dim X$, then $f\pitchfork S$ if and only if $f(X)\cap S=\emptyset$; in this case, the transversality implies $codim_X f^{-1}(S)< codim_Y S$. 

\begin{prop}\label{classique} \cite{GP} Let $g: Y\rightarrow Z$ be a $C^\infty$ map, let $z\in Z$. Assume that $g$ is a submersion. Then the composed map $g\circ f$ is a submersion on $f^{-1}(g^{-1}(z))$ if and only if $f\pitchfork g^{-1}(z)$.
\end{prop}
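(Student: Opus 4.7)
The plan is to reduce the statement to an elementary linear-algebra fact by differentiating at each point of $f^{-1}(g^{-1}(z))$ and invoking the chain rule. First I would set $S := g^{-1}(z)$; because $g$ is a submersion, $z$ is a regular value, so (by Proposition \ref{codim} or a direct application of the implicit function theorem) $S$ is a $C^\infty$ submanifold of $Y$ with $T_y S = \ker d_y g$ for every $y \in S$. This is the only place where the submersion hypothesis on $g$ enters at the smooth level; from here on the argument is pointwise and linear.

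Next I would fix $x \in f^{-1}(S)$, put $y = f(x) \in S$, and translate the two sides of the equivalence into statements about the differentials at $x$ and $y$. By the chain rule $d_x(g\circ f) = d_y g \circ d_x f$, so $g\circ f$ is a submersion at $x$ exactly when
\begin{equation*}
d_y g\bigl(d_x f(T_x X)\bigr) = T_z Z.
\end{equation*}
On the other hand, the transversality condition $f \pitchfork S$ at $x$ reads $d_x f(T_x X) + T_y S = T_y Y$, which under $T_y S = \ker d_y g$ becomes $d_x f(T_x X) + \ker d_y g = T_y Y$.

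The remaining step is the purely linear-algebraic lemma I would state and prove in one line: for a surjective linear map $L\colon V \to W$ and a subspace $A \subset V$, one has $L(A) = W$ if and only if $A + \ker L = V$. Applied with $L = d_y g$, $V = T_y Y$, $W = T_z Z$, and $A = d_x f(T_x X)$ — using once more that $d_y g$ is surjective because $g$ is a submersion — this identifies the two pointwise conditions above. Running this equivalence over all $x \in f^{-1}(S)$ yields the proposition.

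I do not foresee any genuine obstacle: the whole argument is bookkeeping for the chain rule together with one elementary fact about linear maps. The only point that deserves care is the double role played by the submersion hypothesis on $g$ — it is needed both to give meaning to the transversality statement (so that $S$ is a submanifold with a well-defined tangent space) and to make the surjectivity half of the linear-algebra lemma apply.
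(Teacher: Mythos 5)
Your argument is correct and complete: the reduction via the chain rule to the linear-algebra fact that, for a surjective linear map $L$ and a subspace $A$, one has $L(A)=W$ if and only if $A+\ker L=V$, applied with $L=d_yg$ and $A=d_xf(T_xX)$, is exactly the standard proof. The paper itself gives no proof and simply cites \cite{GP}, where this is the argument used, so there is nothing to add beyond noting that the vacuous case $f(X)\cap g^{-1}(z)=\emptyset$ is handled trivially, as you implicitly do.
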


\begin{defi} A semi-algebraic set of $\R^n$ is a subset of $\R^n$ defined by a finite sequence of polynomial equations and inequalities with real coefficients, or any finite union of such sets. In other words, the semi-algebraic subsets of $\R^n$ form the smallest class $SA_n$ of subsets of $\R^n$ such that:

1. If $P\in \R[x_1,\ldots, x_n]$, then $\{x\in \R^n : P(x) = 0\}\in SA_n$ and $\{x\in \R^n : P(x) > 0\}\in SA_n$.

2. If $A\in SA_n$ and $B\in SA_n$, then $A\cap B$, $A\cup B$ and $\R^n - A$ are in $SA_n$.
\end{defi}

\begin{theo}\label{DSA} \cite{BCR} Let $A\subset\R^n$ be a semi-algebraic set. If $A$ is dense in $\R^n$, then there exists a set $B$ which is semi-algebraic open dense in $\R^n$ such that $B\subset A$.
\end{theo}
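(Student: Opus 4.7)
The plan is to take $B$ to be the topological interior of $A$ in $\R^n$ and to verify the three required properties: $B\subset A$, $B$ is semi-algebraic and open, and $B$ is dense in $\R^n$.

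That $B$ is open and contained in $A$ is immediate from the definition of interior. To see $B$ is semi-algebraic, I use the identity $B=\R^n\setminus\overline{\R^n\setminus A}$; since complements and closures of semi-algebraic sets are semi-algebraic (standard facts from \cite{BCR}), $B$ inherits semi-algebraicity.

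The substantive step is the density of $B$, for which I would lean on two standard properties of semi-algebraic subsets of $\R^n$: first, a semi-algebraic set $S\subset\R^n$ has empty interior if and only if $\dim S<n$; and second, taking closure does not increase dimension, i.e., $\dim\overline{S}=\dim S$. Applying these in sequence, since $A$ is dense the complement $\R^n\setminus A$ has empty interior, hence $\dim(\R^n\setminus A)<n$; the closure $\overline{\R^n\setminus A}$ therefore also has dimension strictly less than $n$, and in particular has empty interior. But by construction $\R^n\setminus B=\overline{\R^n\setminus A}$, so $\R^n\setminus B$ has empty interior, which is exactly the statement that $B$ is dense in $\R^n$.

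The only nontrivial content is the invocation of these two semi-algebraic dimension facts, both of which ultimately rest on the cylindrical algebraic decomposition (cell decomposition) theorem for semi-algebraic sets; this is the main obstacle in the sense that the entire argument would collapse without them, but they are classical and cataloged in \cite{BCR}. Once they are available, the proof reduces to the identity $\mathrm{int}(A)=\R^n\setminus\overline{\R^n\setminus A}$ combined with routine dimension bookkeeping.
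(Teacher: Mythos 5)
Your argument is correct and complete: taking $B=\mathrm{int}(A)=\R^n\setminus\overline{\R^n\setminus A}$, using stability of semi-algebraic sets under complement and closure, and then combining the two dimension facts ($\dim S<n$ iff $S$ has empty interior, and $\dim\overline{S}=\dim S$) is exactly the standard route, and each ingredient is correctly located in \cite{BCR}. The paper itself gives no proof of this statement --- it is quoted as a known result --- so there is nothing to diverge from; your write-up simply supplies the missing standard argument.
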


Let us recall the following theorem on the semi-algebraic local triviality of semi-algebraic mappings.

\begin{theo}\label{Hardt}\cite{Har} Let $A\subset\R^n$ and $B\subset\R^m$ be some semi-algebraic sets. Let $f:A\to B$ be a semi-algebraic map. Then there is a $(\dim B-1)$-dimensional semi-algebraic subset $C$ of $B$ such that, for each connected
component $U$ of $B-C$, the restriction $f|_{f^{-1}(U)}\to U$ is a semi-algebraic fibration.
\end{theo}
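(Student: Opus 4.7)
The plan is to prove the theorem by induction on $\dim B$, using semi-algebraic cell decomposition as the main combinatorial tool. First, I would apply cylindrical algebraic decomposition (CAD) to a finite family of polynomials defining $A$, $B$ and the graph $\Gamma_f\subset\R^n\times\R^m$ of $f$, obtaining a finite partition of $\R^n\times\R^m$ into semi-algebraic cells compatible with these three sets. Projecting onto $\R^m$ yields a compatible cell decomposition of $B$. Setting $C$ to be the union of all cells of $B$ of dimension strictly less than $\dim B$, one obtains a semi-algebraic set with $\dim C\le \dim B-1$ such that each connected component $U$ of $B\setminus C$ is a disjoint union of top-dimensional open cells of $B$.

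Over a single top-dimensional cell $D\subset U$, the cylindrical structure produced by CAD exhibits $f^{-1}(D)$ as a finite stack of semi-algebraic graphs over $D$ bounded by further graphs, from which one constructs an explicit semi-algebraic homeomorphism $f^{-1}(D)\to D\times F_D$ commuting with $f$. To upgrade this cell-by-cell picture to a genuine semi-algebraic trivialization over the entire connected component $U$, I would invoke a semi-algebraic version of Thom's first isotopy lemma: take a Whitney stratification of $\overline{A}$ compatible with $f$ and with the chosen cell decomposition, build semi-algebraic controlled vector fields tangent to the strata of $U$, lift them to $f^{-1}(U)$ and integrate to assemble the trivialization.

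The main obstacle is that flows of smooth vector fields are generally not semi-algebraic, so keeping the construction inside the semi-algebraic category is the delicate point. One handles this through \L ojasiewicz-type inequalities and semi-algebraic controlled retractions that guarantee the integral curves remain in semi-algebraic families and the resulting maps are semi-algebraic. This is the content of the semi-algebraic triviality theorem for proper stratified submersions, which really forms the heart of Hardt's argument; once it is available, gluing the local product structures along the top-dimensional cells covering $U$ yields the required semi-algebraic fibration, and passing from the generic stratum $B\setminus C$ to $C$ by induction is immediate since $\dim C<\dim B$.
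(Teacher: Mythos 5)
This is a quoted result (Hardt's semi-algebraic triviality theorem, cited as \cite{Har}); the paper gives no proof of it, so there is no internal argument to compare yours against. Your first two steps --- cylindrical decomposition of $\R^n\times\R^m$ compatible with $A$, $B$ and the graph of $f$, projection to a cell decomposition of $B$, taking $C$ to be the union of the lower-dimensional cells, and reading off the local product structure of $f^{-1}(D)$ over a single open cell $D$ from the cylindrical stack of graphs --- are sound and are indeed how the genuine proofs begin.

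The gap is in your third paragraph. You propose to pass from the cell-by-cell product structure to a trivialization over a whole connected component $U$ by a semi-algebraic Thom--Mather isotopy argument: controlled vector fields, lifted and integrated. You correctly identify that flows of vector fields leave the semi-algebraic category, but the proposed repair --- \L ojasiewicz inequalities and controlled retractions forcing the integral curves to ``remain in semi-algebraic families'' --- does not work and cannot be made to work: integral curves of semi-algebraic (even polynomial) vector fields are essentially never semi-algebraic, and no growth estimate changes that. (The very paper you are reading is devoted to gradient trajectories precisely because they escape the semi-algebraic category.) There is no semi-algebraic isotopy lemma obtained by integration; this is the whole reason Hardt's theorem required a new proof rather than being a corollary of Thom--Mather. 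The actual argument (Hardt's, and the version in Bochnak--Coste--Roy) is purely combinatorial-inductive: one inducts on $\dim B$ and constructs the trivializing homeomorphism $f^{-1}(U)\to U\times F$ directly, by choosing a base point in each top-dimensional cell, using the cylindrical structure to produce explicit semi-algebraic homeomorphisms between fibers over points of the same cell, and then matching these across adjacent cells via the inductive hypothesis applied to the lower-dimensional walls; no vector field is ever integrated. So your outline needs its entire gluing mechanism replaced, not just patched.
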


We finish this part by recalling the following transversality theorem with parameters.

\begin{theo}\label{parametre}\cite{GM,GP} (Transversality theorem with parameters) Let $F:P\times X\rightarrow Y$ be a $C^\infty$ map. Set $f_p=F(p,.):X\rightarrow Y$. If $F\pitchfork S$, then the set
$$D=\{p\in P: f_p\pitchfork S\}$$
is open dense in $P$. Moreover, if $X,Y,S,P$ are semi-algebraic sets and if $F$ is a semi-algebraic map, then $D$ is also semi-algebraic.
\end{theo}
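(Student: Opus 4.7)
The plan is to reduce the parametric transversality condition to the regularity of a projection map, then apply Sard's theorem. Step 1: use the hypothesis $F\pitchfork S$ together with Proposition \ref{codim} to deduce that $W:=F^{-1}(S)$ is a smooth submanifold of $P\times X$ of codimension $\mathrm{codim}_Y S$. Step 2: introduce the restriction $\pi:W\to P$ of the canonical projection $P\times X\to P$, and analyze its regular values.

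The heart of the argument is the following characterization, which I would establish by a direct linear-algebra calculation: a point $p\in P$ is a regular value of $\pi$ if and only if $f_p\pitchfork S$. Indeed, at any $(p,x)\in W$ with $y:=F(p,x)\in S$, the transversality of $F$ gives $T_{(p,x)}W=dF_{(p,x)}^{-1}(T_y S)$; computing the image of this subspace under the differential of the projection onto $T_pP$, one sees that $d\pi_{(p,x)}$ is surjective exactly when
$$d(f_p)_x(T_xX)+T_yS=T_yY.$$
Applying Sard's theorem to $\pi$ then yields that the set $D$ of regular values of $\pi$, which now coincides with $\{p\in P:f_p\pitchfork S\}$, has full measure in $P$ and in particular is dense.

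For the openness of $D$ in the general smooth setting one invokes a properness hypothesis (compact exhaustion of $X$, or properness of $F$); in the semi-algebraic setting of the second assertion the argument is cleaner. The condition ``$f_p\pitchfork S$'' is a first-order formula in the language of ordered fields: one quantifies over points $x\in f_p^{-1}(S)$ and over linear combinations of a basis of $T_xX\oplus T_yS$, all of whose coefficients are polynomial in the semi-algebraic data defining $F$, $X$, $Y$, $S$. By Tarski--Seidenberg, $D$ is then semi-algebraic. Combining its density with its semi-algebraicity, Theorem \ref{DSA} produces a semi-algebraic open dense subset of $P$ contained in $D$; this delivers simultaneously the openness claim and the semi-algebraic refinement.

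The main obstacle I anticipate is the passage from density to openness: Sard alone gives density (or residuality), and upgrading to openness is where one truly needs either a properness assumption or the tame semi-algebraic framework supplied by Theorem \ref{DSA}. The linear-algebra identification in Step 2 is routine but must be carried out carefully, since the whole reduction of parametric transversality to Sard hinges on the precise equivalence between surjectivity of $d\pi$ on $T_{(p,x)}W$ and the transversality of $f_p$ to $S$ at $x$.
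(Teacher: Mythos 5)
Your proposal is essentially the classical proof of parametric transversality, and it is sound: the identification of $W=F^{-1}(S)$ as a submanifold via Proposition \ref{codim}, the linear-algebra equivalence between surjectivity of $d\pi_{(p,x)}$ on $T_{(p,x)}W=dF_{(p,x)}^{-1}(T_yS)$ and the condition $d(f_p)_x(T_xX)+T_yS=T_yY$, and the application of Sard's theorem to $\pi:W\to P$ are all correct and are exactly the argument of the sources \cite{GM,GP} that the paper cites. The paper itself gives no proof beyond that citation (plus the assertion that ``the method also permits'' the semi-algebraic refinement), so in substance you have supplied the details the paper omits. The one point worth flagging is the openness of $D$: as you correctly observe, Sard yields only density (indeed a full-measure complement), and openness of $D$ itself genuinely requires extra hypotheses (e.g.\ $S$ closed in $Y$ together with properness or compactness); your workaround via Tarski--Seidenberg and Theorem \ref{DSA} shows that $D$ is semi-algebraic, dense, and \emph{contains} a semi-algebraic open dense set, which is strictly weaker than the literal claim that $D$ is open but is precisely the form in which the theorem is invoked everywhere else in the paper (the sets $K_d'$, $B_d^1$, $B_d^2$, etc.). So the residual discrepancy is with the statement as written, not with your argument; your Tarski--Seidenberg justification of semi-algebraicity of the transversality condition is also the right mechanism and is more explicit than the paper's one-line assertion.
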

\begin{proof} The proof of openness and density of $D$ is done in \cite{GM,GP}. The method used also permits to prove that $D$ is semi-algebraic if $X,Y,S,P$ and $F$ are semi-algebraic.
\end{proof}


\section{Generic horizontal critical set}

\hskip 0.5cm In this part, we study the generic dimension of $V_f$.

\begin{theo}\label{dimension}Generically, $V_f$ is smooth and of dimension $2$ or is empty, i.e., the set
\begin{equation}K_d=\{f\in\R_d[x]:V_f \text{ is smooth of dimension } 2 \text{ or is empty}\}\end{equation}
contains a semi-algebraic open dense set in $\R_d[x]$.
\end{theo}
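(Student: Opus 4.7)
The plan is to apply Theorem~\ref{parametre} to the evaluation map
$$F:\R_d[x]\times\R^4\to\R^2,\qquad F(f,x)=(X_1f(x),X_2f(x)),$$
which is polynomial (hence semi-algebraic) in $(f,x)$ and satisfies $V_f=F(f,\cdot)^{-1}(\{0\})$. Assuming $d\geq 1$ (if $d=0$ then $f$ is constant and $V_f=\R^4$, so $K_d$ is empty and the statement is vacuous/non-applicable), the coordinate polynomials $x_1,x_2$ belong to $\R_d[x]$, and a direct computation using the defining formulas of $X_1,X_2$ gives $X_i(x_j)=\delta_{ij}$ for $i,j\in\{1,2\}$. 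Consequently, moving $f$ in the direction $ax_1+bx_2\in\R_d[x]$ shifts $F(f,x)$ by $(a,b)\in\R^2$, independently of $(f,x)$. So the partial differential of $F$ in the polynomial variable is already surjective onto $\R^2$ at every point, which in particular gives $F\pitchfork\{0\}$.

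Theorem~\ref{parametre} then yields that
$$D=\{f\in\R_d[x]\;:\;F(f,\cdot)\pitchfork\{0\}\}$$
is open, dense and semi-algebraic in $\R_d[x]$. For each $f\in D$, Proposition~\ref{codim} applied to the map $F(f,\cdot):\R^4\to\R^2$ and the submanifold $\{0\}\subset\R^2$ gives that either $V_f=\emptyset$ or $V_f$ is a smooth submanifold of $\R^4$ of codimension $2$, that is of dimension $2$. Hence $D\subset K_d$, which proves the theorem.

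The only non-routine step would normally be checking the universal transversality of $F$, but the pairing $X_i(x_j)=\delta_{ij}$ collapses that step to a one-line computation: the two coordinate polynomials $x_1,x_2$ already exhaust the target $\R^2$ under the $f$-derivative of $F$. So the proof is parallel to that of Theorem~4.4 in \cite{DKO}, with $x_1$ and $x_2$ serving as the generic perturbations.
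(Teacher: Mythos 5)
Your proof is correct and is essentially identical to the paper's argument: the paper also applies the transversality theorem with parameters to the evaluation map $(f,x)\mapsto(X_1f(x),X_2f(x))$, verifying surjectivity of the $f$-derivative via the coefficients of $x_1$ and $x_2$ (the paper exhibits the identity block $(\partial L/\partial\alpha_1,\partial L/\partial\alpha_2)$, which is exactly your observation $X_i(x_j)=\delta_{ij}$), and then concludes with Proposition~\ref{codim}.
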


\begin{proof} Let us write a polynomial $f\in\R_d[x]$ as follows
$$f=\displaystyle\alpha_0+\sum_{i=1}^4\alpha_ix_i+g$$
where $g$ does not have the linear part. Then

$X_1f=\alpha_1+X_1g,$

$X_2f=\alpha_2+\alpha_3x_1+\alpha_4x_3+X_2g.$

\noindent We identify $f\in\R_d[x]$ with its coefficient vector. Set

\begin{displaymath}\begin{array}{ccl}
$$L:\R_d[x]\times\R^4& \rightarrow & \R^2$$\\
$$(f,x) & \mapsto & ((X_1f)(x),(X_2f)(x)).$$
\end{array}\end{displaymath}
The Jacobian matrix of $L$ contains the following matrix

\begin{displaymath} (\displaystyle\frac{\partial L}{\partial\alpha_1},\frac{\partial L}{\partial\alpha_2})=
\left(\begin{array}{cccc}
$$1&0$$\\
$$0&1$$
\end{array}\right).\end{displaymath}
This implies that $L$ is a submersion. Consequently, $L$ is transverse to $\{0\}\subset\R^2.$ By Theorem \ref{parametre}, the set

\begin{equation}\label{Kd'}
K_d':=\{f\in\R_d[x]: \nabla^hf = L(f,.)\pitchfork \{0\}\}
\end{equation}
is semi-algebraic open dense in $\R_d[x]$. For each $f\in K_d'$, by Proposition \ref{codim} and by this transversality, $V_f$ is smooth and if $V_f\not=\emptyset$, then $codim V_f=codim_{\R^2}\{0\}=2,$ so $dim V_f=2$. Therefore $K_d'\subset K_d$. This ends the proof of the theorem.
\end{proof}

\begin{prop} \label{submersion} Generically, the map $\nabla^h f:\R^4\rightarrow\R^2$ is a submersion on $V_f$, i.e., the set
$$L_d:=\{f\in\R_d[x]:\nabla^hf \text{ is submersive on }V_f\}$$
contains a semi-algebraic open dense set in $\R_d[x]$.
\end{prop}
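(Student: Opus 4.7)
The plan is to observe that Proposition~\ref{submersion} is essentially a reformulation of what has already been established inside the proof of Theorem~\ref{dimension}. The bridge is the elementary remark that transversality of a smooth map $F\colon X\to Y$ to a singleton $\{y_0\}\subset Y$ at a point $x\in F^{-1}(y_0)$ reads $d_xF(T_xX)+T_{y_0}\{y_0\}=T_{y_0}Y$; since $T_{y_0}\{y_0\}=0$, this is exactly surjectivity of $d_xF$, i.e. submersivity of $F$ at $x$. Equivalently one can invoke Proposition~\ref{classique} with $g$ equal to the identity of $\R^2$. Specialising to $F=\nabla^h f$ and $y_0=0$, the condition ``$\nabla^h f$ is submersive on $V_f$'' becomes literally identical to ``$\nabla^h f\pitchfork\{0\}$''.

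Once that identification is made, the statement follows directly. The condition $\nabla^h f\pitchfork\{0\}$ is precisely the one defining the set $K_d'$ in (\ref{Kd'}), and the proof of Theorem~\ref{dimension} already showed that $K_d'$ is semi-algebraic open dense in $\R_d[x]$. That argument proceeds via the global evaluation map $L\colon\R_d[x]\times\R^4\to\R^2$, $(f,x)\mapsto((X_1f)(x),(X_2f)(x))$, which is a submersion thanks to the free coefficients $\alpha_1,\alpha_2$ appearing linearly in $X_1f,X_2f$; hence $L\pitchfork\{0\}$, and the semi-algebraic version of the transversality theorem with parameters (Theorem~\ref{parametre}) yields the semi-algebraic open density of $\{f:L(f,\cdot)\pitchfork\{0\}\}=K_d'$. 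Therefore $L_d\supset K_d'$, which proves the proposition.

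Because the whole argument is a reinterpretation of Theorem~\ref{dimension}, there is no substantial obstacle. The only conceptual step is recognising that transversality to a point is the same as submersivity at that point; the real work, namely the application of the parametric transversality theorem together with the submersivity of $L$, has already been carried out. One could accordingly state Proposition~\ref{submersion} as a corollary of Theorem~\ref{dimension}; isolating it highlights the submersion property of $\nabla^h f$ along $V_f$, which is the formulation most convenient for the forthcoming analysis of horizontal trajectories near $V_f$.
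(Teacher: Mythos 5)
Your proposal is correct and follows exactly the paper's own argument: the paper's proof of Proposition~\ref{submersion} likewise observes that for $f\in K_d'$ the transversality $\nabla^hf\pitchfork\{0\}$ means precisely that $\nabla^hf$ is a submersion on $(\nabla^hf)^{-1}(\{0\})=V_f$, so $L_d\supset K_d'$. The identification of transversality to a point with submersivity on the fiber, and the reliance on the already-established semi-algebraic open density of $K_d'$, are the same in both.
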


\begin{proof} For every $f\in K_d'$, the transversality condition $\nabla^hf\pitchfork \{0\}$ implies that $\nabla^hf$ is a submersion on $(\nabla^hf)^{-1}(\{0\})$, i.e., on $V_f$.
\end{proof}

If $f$ is an affine form $f=\displaystyle\sum_{i=1}^4 \alpha_ix_i+\alpha_0$, then $X_1f=\alpha_1$, $X_2f=\alpha_2+\alpha_3x_1+\alpha_4x_3.$ So if $\alpha_1\not=0$, then $V_f$ is empty. Let $K_1'':=\{f\in\R_1[x] : \alpha_1\not=0\}$. It is clear that $K_1''$ is semi-algebraic open dense in $\R_1[x]$ and $K_1''\subset K_1$. Then we have the following result for the affine case.

\begin{prop}\label{empty} Generically, $V_f$ is empty for $f\in\R_1[x]$, i.e., the set
\begin{equation}\label{K1}\underline K_1=\{f\in\R_1[x]: V_f \text{ is empty} \}\end{equation}
contains a semi-algebraic open dense set in $\R_1[x]$.
\end{prop}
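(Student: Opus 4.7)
The plan is to observe that this proposition is essentially a direct consequence of the calculation already carried out in the paragraph immediately preceding its statement. The key computation is that, for an affine $f = \sum_{i=1}^4 \alpha_i x_i + \alpha_0$, the horizontal derivatives are
$$X_1 f = \alpha_1, \qquad X_2 f = \alpha_2 + \alpha_3 x_1 + \alpha_4 x_3,$$
so the first component of $\nabla^h f$ is the \emph{constant} $\alpha_1$. Consequently, whenever $\alpha_1 \neq 0$, the equation $X_1 f = 0$ has no solution and $V_f = \emptyset$.

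Concretely, I would take the set
$$K_1'' = \{ f \in \R_1[x] : \alpha_1 \neq 0 \}$$
already introduced above the proposition and show $K_1'' \subset \underline K_1$. By the previous paragraph, this inclusion holds because $\alpha_1 \neq 0$ forces $V_f = \emptyset$. It then only remains to justify that $K_1''$ is semi-algebraic, open and dense in $\R_1[x]$, which I would do by noting that $K_1''$ is the complement in $\R_1[x] \cong \R^5$ of the hyperplane $\{\alpha_1 = 0\}$: its complement is defined by a single polynomial equation (hence semi-algebraic and closed), and is a proper linear subspace (hence nowhere dense). Invoking Theorem~\ref{DSA} is not even necessary here, since $K_1''$ is already manifestly open, dense and semi-algebraic.

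There is no real obstacle: the entire content of the proposition is already contained in the explicit expression $X_1 f = \alpha_1$, which forces the horizontal gradient to be nowhere vanishing as soon as one moves off a single coordinate hyperplane. The proof is a one-line verification together with the remark that the complement of a hyperplane is semi-algebraic open dense.
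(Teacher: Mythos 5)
Your proof is correct and follows exactly the same route as the paper: the explicit computation $X_1f=\alpha_1$ shows $V_f=\emptyset$ whenever $\alpha_1\neq 0$, and $K_1''=\{\alpha_1\neq 0\}$ is manifestly a semi-algebraic open dense subset of $\R_1[x]$ contained in $\underline K_1$. Nothing further is needed.
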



\section{Horizontal curve in $V_f$}

\hskip 0.5cm From now on, we denote $X_{i_1i_2\ldots i_k}f:=X_{i_1}(X_{i_2}(\ldots(X_{i_k}f)\ldots))$ for $f\in\R_d[x]$. We still write a polynomial $f\in\R_d[x]$ under the following form
$$\displaystyle f=\sum_{i=1}^4\alpha_ix_i+\frac{1}{2}\sum_{i,j=1}^4\beta_{ij}x_ix_j+g, $$ 
where $g$ does not contain monomials of degree $1,2$. The goal of this section is to prove the following result.

\begin{theo}\label{noncontient} Generically, the set $V_f$ does not contain a horizontal curve which is contained in a fiber of $f$, i.e., the set
\begin{equation}\label{Md}M_d=\{f\in\R_d[x]: V_f\cap f^{-1}(t) \text{ does not contain a horizontal curve for any }t\}\end{equation}
contains a semi-algebraic open dense set in $\R_d[x]$.
\end{theo}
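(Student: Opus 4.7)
The plan is to translate the condition ``$V_f$ contains a horizontal curve'' into an algebraic system of four equations in $x$ depending polynomially on the coefficients of $f$, and then apply the parametric transversality Theorem \ref{parametre}.

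\textbf{Reduction to an algebraic system.} Suppose $\gamma:(t_1,t_2)\to V_f$ is a non-constant horizontal curve with $\dot\gamma(t)=a(t)X_1+b(t)X_2$ and $(a,b)\not\equiv (0,0)$. Differentiating the identities $X_1f\circ\gamma\equiv 0$ and $X_2f\circ\gamma\equiv 0$ gives
$$aX_{11}f+bX_{21}f=0,\qquad aX_{12}f+bX_{22}f=0\quad\text{along }\gamma,$$
so the determinant
$$\delta:=X_{11}f\cdot X_{22}f-X_{21}f\cdot X_{12}f$$
vanishes on $\gamma$. Differentiating $\delta\circ\gamma\equiv 0$ in turn yields $aX_1\delta+bX_2\delta=0$ along $\gamma$; combined with the two previous equations, $(a,b)$ lies in the kernel of the $3\times 2$ matrix obtained by appending the row $(X_1\delta,X_2\delta)$ to $M:=\bigl(\begin{smallmatrix}X_{11}f&X_{21}f\\X_{12}f&X_{22}f\end{smallmatrix}\bigr)$. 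Consequently every $2\times 2$ minor of this matrix vanishes on $\gamma$; taking the minor formed by rows $1$ and $3$,
$$\mu:=X_{11}f\cdot X_2\delta-X_{21}f\cdot X_1\delta,$$
one obtains $\mu\equiv 0$ along $\gamma$. Hence $\gamma$ is contained in
$$Z_f:=\{x\in\R^4 : X_1f(x)=X_2f(x)=\delta(x)=\mu(x)=0\}.$$

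\textbf{Transversality setup.} Consider
$$\tilde F:\R_d[x]\times\R^4\longrightarrow\R^4,\qquad \tilde F(f,x)=(X_1f(x),X_2f(x),\delta(x),\mu(x)).$$
The linear coefficients $\alpha_1,\alpha_2$ of $f$ occur only in $X_1f$ and $X_2f$, never in any higher derivative $X_{ij}f$, so that $\partial\tilde F/\partial\alpha_1=(1,0,0,0)$ and $\partial\tilde F/\partial\alpha_2=(0,1,0,0)$, exactly as in the proof of Theorem \ref{dimension}. To cover the remaining directions, one uses perturbations of $f$ by quadratic monomials (which act independently on the entries $X_{ij}f(x)$ and hence on $\delta(x)$) and by cubic monomials (which act independently on $X_{ijk}f(x)$ and hence on $X_i\delta(x)$ and $\mu(x)$), combined with compensating adjustments of $\alpha_1,\alpha_2$ to preserve the vanishing of the first two components of $\tilde F$ at $x$. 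The claim is that at every $(f,x)$ with $\tilde F(f,x)=0$ the full Jacobian of $\tilde F$ has rank $4$, so $\tilde F\pitchfork\{0\}$.

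\textbf{Conclusion via Theorem \ref{parametre}.} The parametric transversality theorem then yields a semi-algebraic open dense set
$$U_d:=\{f\in\R_d[x]:\tilde F(f,\cdot)\pitchfork\{0\}\}\subset\R_d[x].$$
For $f\in U_d$, Proposition \ref{codim} forces $Z_f=\tilde F(f,\cdot)^{-1}(0)$ to be either empty or a $0$-dimensional smooth submanifold of $\R^4$, i.e.\ a finite set of isolated points. By the reduction step, any non-constant horizontal curve in $V_f$ must lie inside this finite set, which is impossible. Moreover $df(\dot\gamma)=aX_1f+bX_2f=0$ on any horizontal $\gamma\subset V_f$, so such a curve is automatically contained in a single fiber $f^{-1}(t)$; hence $U_d\subset M_d$. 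The small-degree cases are treated directly: for $d=1$ one invokes Proposition \ref{empty}, and for $d=2$ the $X_{ij}f$ are constants and $\delta$ is a generically non-zero constant, so $V_f\cap\{\delta=0\}$ is empty.

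\textbf{Main obstacle.} The technical heart is the verification of $\tilde F\pitchfork\{0\}$ uniformly on its zero set. The function $\mu$ is of relatively high degree in the coefficients of $f$ and mixes second- and third-order $X$-derivatives, so at degenerate base points (for example where some $X_{ij}f(x)$ already vanishes, or where $\operatorname{rank}M=0$) one must produce explicit cubic perturbations reaching $(0,0,0,1)$ in the Jacobian without destroying the first three components. Auxiliary codimension counts showing that the truly degenerate strata are generically absent on $Z_f$ will likely be needed to reduce to the ``generic'' configuration in which the above perturbation scheme works cleanly.
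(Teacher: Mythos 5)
Your reduction step is sound and in fact recovers the paper's key objects: your $\delta$ is the paper's $Gf$, your system $\{X_1f=X_2f=\delta=0\}$ is $\Gamma_f$, and your $\mu=X_{11}f\cdot X_2\delta-X_{21}f\cdot X_1\delta$ is exactly $[d(Gf)](\xi_f)$ with $\xi_f=-(X_{21}f)X_1+(X_{11}f)X_2$, so your $Z_f$ is the locus where $\Gamma_f$ is tangent to the fibers of $f$ (Lemma \ref{tangency}). The genuine gap is the step you yourself flag as the ``main obstacle'': the claim that $\tilde F\pitchfork\{0\}$ at \emph{every} zero of $\tilde F$ is false, so Theorem \ref{parametre} cannot be applied to $\tilde F$ as you have set it up. On the stratum where $X_1f(x)=X_2f(x)=0$ and all four entries $X_{ij}f(x)$ vanish (nonempty for $d\ge 3$: take $f=x_1^3$ at the origin), both $\delta$ and $\mu$ are sums of products of two functions vanishing at that point, so $d\delta$ and $d\mu$ vanish identically there and the Jacobian of $\tilde F$ has rank at most $2$. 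Similarly, on $\{X_1f=X_2f=X_{11}f=X_{21}f=0\}$ one has $\mu\equiv 0$ with $d\mu=X_2\delta\,dX_{11}f-X_1\delta\,dX_{21}f$ and $d\delta=X_{22}f\,dX_{11}f-X_{12}f\,dX_{21}f$, two covectors confined to a common $2$-plane which may well be proportional. Since parametric transversality requires transversality at every point of $\tilde F^{-1}(0)$, your set $U_d$ is not produced by Theorem \ref{parametre}; you would first have to excise these degenerate strata, prove separately that they meet $Z_f$ in a generically finite set (this is what the paper's sets $B_d^2$ and $D_d$, i.e., the finiteness of $S_f^1,S_f^2,S_f^3$ and of $\Omega_f\cup Cr(f)$, accomplish), and then establish the rank-$4$ condition on the remaining open set. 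None of this is carried out, and it is precisely where all the work lies.

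That said, the computation you are missing on the good stratum is essentially the one the paper performs: for $x\in\Gamma_f-(\Omega_f\cup Cr(f))$ with $\Gamma_f$ smooth, $d_x(X_1f,X_2f,Gf)$ already has rank $3$, and the explicit family $f_\epsilon=f+\alpha x_2+\frac{\beta}{2}x_2^2+\gamma x_4$ constrained by (\ref{direction}) fixes the first three components of $\tilde F$ at $x$ exactly while moving $\mu$ by $-\gamma\, X_{11}f(x)X_{21}f(x)\ne 0$ (computation (\ref{calculdif})); that is the fourth independent direction you need. The paper deliberately avoids the global transversality statement: it uses this single perturbation to destroy one fiber-contained component of $\Gamma_f$ at a time, controls the remaining components via Claim \ref{cl3}, terminates the induction with the Milnor--Thom bound $c(d)$, and obtains semi-algebraicity from Tarski--Seidenberg together with Theorem \ref{DSA} rather than from Theorem \ref{parametre}. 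A further small error: your shortcut for $d=2$ is wrong, because $X_2$ has non-constant coefficients, so $X_{12}f$ and $X_{22}f$ are not constants for quadratic $f$ (for instance $X_{12}(x_2x_3)=x_2$) and $\delta$ is not a constant; the quadratic case needs the same treatment as $d\ge 3$.
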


In the case $d=1$, the proof of Theorem \ref{noncontient} is trivial since $V_f=\emptyset$ for all $\dis f=\alpha_0+\sum_{i=1}^4\alpha_ix_i$ with $\alpha_1\not=0$. So assume that $d\geq 2$. First of all, we characterize the 
set of horizontal curves contained in the fibers of $f$.

Set \begin{equation}\label{gamma}\Gamma_f:=\{x\in V_f: T_xV_f\cap\Delta_x\not=\{0\}\}.\end{equation}

First of all, we show that $\Gamma_f$ is a smooth algebraic curve generically.

\begin{lem}\label{courbelisse} There exists a semi-algebraic open dense set $B_d\subset\R_d[x]$ such that for every $f\in B_d$, the set $\Gamma_f$ is a smooth algebraic curve or an empty set.
\end{lem}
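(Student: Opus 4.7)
The idea is to characterize $\Gamma_f$ by polynomial equations and apply the transversality theorem with parameters (Theorem~\ref{parametre}). For $x \in V_f$ where $V_f$ is smooth (as holds for $f$ in the set $K_d'$ of Theorem~\ref{dimension}), we have $T_xV_f = \ker d(X_1f)_x \cap \ker d(X_2f)_x$, and a horizontal vector $v = aX_1(x) + bX_2(x)$ lies in $T_xV_f$ iff $(a,b)$ is in the kernel of
\[
M(f,x) := \begin{pmatrix} X_{11}f(x) & X_{21}f(x) \\ X_{12}f(x) & X_{22}f(x) \end{pmatrix}.
\]
Thus $T_xV_f \cap \Delta_x \neq \{0\}$ iff $\phi(f,x) := \det M(f,x) = X_{11}f \cdot X_{22}f - X_{12}f \cdot X_{21}f$ vanishes at $x$, so that $\Gamma_f = L(f,\cdot)^{-1}(0)$ where $L : \R_d[x] \times \R^4 \to \R^3$ is given by $L(f,x) := (X_1f(x), X_2f(x), \phi(f,x))$. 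If I can prove $L(f,\cdot) \pitchfork \{0\}$ for $f$ in some open dense semi-algebraic set $B_d$, then Proposition~\ref{codim} immediately yields $\Gamma_f$ smooth of codimension $3$ in $\R^4$, i.e.\ a smooth algebraic curve or empty.

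The main obstacle is that $L$ is \emph{not} globally transverse to $\{0\}$: at a ``degenerate'' point $(f,x)$ where all four second derivatives $X_{ij}f(x)$ for $i,j \in \{1,2\}$ vanish simultaneously, the differential $d\phi$ vanishes to first order and $dL$ has rank at most~$2$. I would circumvent this by introducing the auxiliary map
\[
H : \R_d[x] \times \R^4 \to \R^6, \quad H(f,x) := (X_1f, X_2f, X_{11}f, X_{22}f, X_{12}f, X_{21}f)(x),
\]
and showing directly that $H$ is a submersion. Indeed, varying the coefficients of the monomials $x_1$, $x_2$, $x_1^2$, $x_2^2$, $x_1x_2$ and $x_3$ produces six $dH$-vectors that span $\R^6$ at every $(f,x)$; the crucial point is that the coefficient of $x_1x_2$ contributes symmetrically to $X_{12}f$ and $X_{21}f$, while the coefficient of $x_3$ contributes to $X_{12}f$ (since $X_{12}(x_3) = X_1(x_1) = 1$) but not to $X_{21}f$ (since $X_{21}(x_3) = X_2(0) = 0$), breaking the symmetry needed to hit both last coordinates independently. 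Since $\mathrm{codim}_{\R^6}\{0\} = 6 > 4 = \dim \R^4$, Theorem~\ref{parametre} combined with Proposition~\ref{codim} yields an open dense semi-algebraic $B_d^H \subset \R_d[x]$ for which $H(f,\cdot)^{-1}(0) = \emptyset$, i.e.\ no degenerate points appear.

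To conclude, let $B_d := K_d' \cap B_d^H$. For $f \in B_d$ and every $x \in L(f,\cdot)^{-1}(0)$, at least one of the four $X_{ij}f(x)$ is nonzero. A straightforward calculation yields
\[
\tfrac{\partial\phi}{\partial\beta_{11}} = X_{22}f, \qquad \tfrac{\partial\phi}{\partial\beta_{22}} = X_{11}f, \qquad \tfrac{\partial\phi}{\partial\beta_{12}} = -(X_{12}f + X_{21}f),
\]
where $\beta_{ij}$ denote the coefficients of the quadratic monomials of $f$ in the symmetric convention. A short case analysis shows that if $\phi(f,x) = 0$ but not all four second derivatives vanish, then at least one of the three partials above is nonzero at $x$ (otherwise $X_{11}f = X_{22}f = 0$ and $X_{21}f = -X_{12}f$ give $\phi = (X_{12}f)^2$, forcing $X_{12}f = X_{21}f = 0$, contradiction). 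Combined with $\partial L/\partial\alpha_1 = e_1$ and $\partial L/\partial\alpha_2 = e_2$, this makes $dL_{(f,x)}$ surjective, so $L(f,\cdot) \pitchfork \{0\}$ holds on its entire zero set, and Proposition~\ref{codim} completes the proof.
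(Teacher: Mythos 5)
Your strategy is essentially the one the paper uses: your map $H$ is, up to reordering of coordinates, exactly the paper's map $\Theta$, your submersion computation (via the coefficients of $x_1,x_2,x_1^2,x_2^2,x_1x_2,x_3$, with the coefficient of $x_3$ breaking the $X_{12}/X_{21}$ symmetry) is the paper's Jacobian computation, and your separate treatment of the fully degenerate locus $H^{-1}(0)$ corresponds to the paper's splitting of $\{(0,0)\}\times\Sigma_2$ into the smooth stratum $\Sigma_2-\{0I_2\}$ and the singular point $0I_2$. Your cofactor computation $\partial\phi/\partial\beta_{11}=X_{22}f$, $\partial\phi/\partial\beta_{22}=X_{11}f$, $\partial\phi/\partial\beta_{12}=-(X_{12}f+X_{21}f)$ and the ensuing case analysis are correct; they amount to verifying by hand that the determinant has nonvanishing differential on $\Sigma_2-\{0I_2\}$, which the paper gets for free from the submersivity of $\Theta$.

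There is, however, one non sequitur in your last sentence. From the surjectivity of the \emph{full} differential $dL_{(f,x)}$ (which includes the parameter directions $\alpha_1,\alpha_2,\beta_{11},\beta_{22},\beta_{12}$) you conclude directly that $L(f,\cdot)\pitchfork\{0\}$ for every $f\in K_d'\cap B_d^H$. That inference is false for a \emph{fixed} $f$: surjectivity of $dL$ at $(f,x)$ only says that $L$, as a map on $\R_d[x]\times\R^4$, is transverse to $\{0\}$ there, and a particular $f$ may perfectly well have a point $x\in\Gamma_f$ with $H(f,x)\neq 0$ at which $d_x(X_1f)$, $d_x(X_2f)$ and $d_x\phi(f,\cdot)$ are linearly dependent --- such $f$ are exactly what genericity must exclude, and your set $K_d'\cap B_d^H$ does not exclude them. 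The repair is routine and consistent with your announced plan: restrict $L$ to the open set $U=(\R_d[x]\times\R^4)\setminus H^{-1}(0)$, note that your computation shows $L|_U\pitchfork\{0\}$, apply Theorem~\ref{parametre} to $L|_U$ to obtain an open dense semi-algebraic set $B_d^L$ of parameters $f$ for which the slice $L|_U(f,\cdot)$ is transverse to $\{0\}$, and take $B_d:=K_d'\cap B_d^H\cap B_d^L$; for such $f$ the zero set of $L(f,\cdot)$ avoids $H(f,\cdot)^{-1}(0)$ entirely, so Proposition~\ref{codim} applies and gives the smooth curve. (One also needs the openness/density of $B_d^L$ in all of $\R_d[x]$ rather than in the slice of $U$, which holds because $B_d^H$ already forces the fiber over $f$ to lie in $U$.) With this one-sentence correction your proof is complete and coincides in substance with the paper's.
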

\begin{proof} Consider the map
\begin{displaymath}\begin{array}{rcl}
$$\Theta:\R^4\times\R_d[x]&\rightarrow&\R^2\times M_2$$\\
$$(x,f)&\mapsto&\displaystyle(X_1f(x),X_2f(x),X_{11}f(x),X_{21}f(x),X_{22}f(x),X_{12}f(x)),$$
\end{array}\end{displaymath}
where $M_2\cong\R^4$ is the space of $2\times 2$ matrices. The Jacobian matrix of $\Theta$ contains the following matrix
\begin{displaymath} \displaystyle\frac{\partial\Theta}{\partial(\alpha_1,\alpha_2,\beta_{11},\beta_{12},\beta_{22},\alpha_3)}=
\left (\begin{array}{cccccc}
$$1 & 0 & x_1 & x_2 & 0 & 0$$ \\
$$0 & 1 & 0 & x_1 & x_2 & x_1$$ \\ 
$$0 & 0 & 1 & 0 & 0  & 0$$ \\
$$0 & 0 & 0 & 1 & 0 & 0$$ \\
$$0 & 0 & 0 & 0 & 1 & 0$$ \\
$$0 & 0 & 0 & 1 & 0 & 1$$ \\
\end{array}\right),
\end{displaymath}
whose rank is $6$. Therefore, $\Theta$ is a submersion and hence is transverse to $\{(0,0)\}\times\Sigma_2$ where $\Sigma_2$ is the algebraic subset of $M_2$, constituted of degenerate matrices. Denote by $0I_2$ the zero matrix in $M_2$. Note that $0I_2$ is the only singular point of $\Sigma_2$. By  Transversality theorem with parameters \ref{parametre}, the sets
$$B_d^1:=\{f\in\R_d[x]:\Theta(.,f)\pitchfork \{(0,0)\}\times (\Sigma_2-\{0I_2\})\},$$
$$B_d^2:=\{f\in\R_d[x]:\Theta(.,f)\pitchfork \{(0,0)\times 0I_2\}\}$$
are semi-algebraic open dense in $\R_d[x]$. For every $f\in B_d^2$, by Proposition \ref{codim}, we deduce that $(0,0)\times 0I_2\not\in Im\Theta(.,f)$. Let $f\in B_d:=B_d^1\cap B_d^2$. We have $\Theta(.,f)\pitchfork\{(0,0)\}\times (\Sigma_2-\{0I_2\})$ and $Im\Theta(.,f)$ does not contain the singular point $\{(0,0)\times 0I_2\}$. Finally, by Proposition \ref{codim}, the set $(\Theta(.,f))^{-1}(\{(0,0)\}\times \Sigma_2)$ is smooth and if it is not empty, we have
$$codim_{\R^4} (\Theta(.,f))^{-1}(\{(0,0)\}\times \Sigma_2)=codim_{\R^6} \{(0,0)\}\times \Sigma_2=3.$$
Hence $dim(\Theta(.,f))^{-1}(\{(0,0)\}\times \Sigma_2)=1.$ Note that $x\in(\Theta(.,f))^{-1}(\{(0,0)\}\times \Sigma_2)$ if and only if $d_x(\nabla^hf)|_{\Delta_x}$ is degenerate, hence if and only if $T_xV_f\cap\Delta_x\not=\{0\}$. This ends the proof of the lemma.

\end{proof}

\begin{re}\label{coro}{\rm Is is easy to check that the condition that $d_x(\nabla^hf)|_{\Delta_x}$ is degenerate gives
\begin{equation}\label{equagammaf}\Gamma_f=\{x\in\R^4:X_1f=X_2f=X_{11}fX_{22}f-X_{21}fX_{12}f=0\}.\end{equation}}
\end{re}

\begin{re}\label{MT}{\rm If $\deg f\le d$, then
\begin{enumerate}
\item[(i)] $\deg X_1f\leq d-1$, $\deg X_2f\leq d$,
\item[(ii)] $\deg X_{11}f\leq d-2$, $\deg X_{21}f\leq d-1$, $\deg X_{12}f\leq d-1$, $\deg X_{22}f\leq d$.
\end{enumerate}
Hence by \cite{Mil,Tho}, the number of connected components of $\Gamma_f$ is bounded by $c(d):=2(d-1)[4(d-1)-1]^3$. }
\end{re}

The following lemma permits to localize the horizontal curves contained in $V_f$.

\begin{lem}\label{conhori} Let $f\in B_d$. If $\gamma\subset V_f$ is a horizontal curve, then $\gamma\subset\Gamma_f$.
\end{lem}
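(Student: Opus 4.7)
The plan is to show that at any parameter $t$ where $\dot\gamma(t)$ exists, is horizontal, and is nonzero, the three algebraic equations characterizing $\Gamma_f$ from Remark \ref{coro} must all be satisfied at $\gamma(t)$; then closedness of $\Gamma_f$ together with continuity of $\gamma$ will propagate the conclusion to the entire curve.

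The starting point is that $\gamma\subset V_f$ means $X_1 f\circ\gamma\equiv 0$ and $X_2 f\circ\gamma\equiv 0$. Differentiating these identities at any $t$ at which $\gamma$ is differentiable gives
$$d(X_1 f)_{\gamma(t)}\cdot\dot\gamma(t)=0,\qquad d(X_2 f)_{\gamma(t)}\cdot\dot\gamma(t)=0.$$
Horizontality of $\gamma$ lets me write, for almost every $t$, $\dot\gamma(t)=a(t)\,X_1(\gamma(t))+b(t)\,X_2(\gamma(t))$. Substituting, the two derivative relations become the linear system
$$\begin{pmatrix} X_{11}f & X_{21}f \\ X_{12}f & X_{22}f \end{pmatrix}_{\gamma(t)}\begin{pmatrix} a(t) \\ b(t)\end{pmatrix}=\begin{pmatrix}0\\0\end{pmatrix}.$$
At any such $t$ where $\dot\gamma(t)\ne 0$, the column $(a(t),b(t))^{\mathsf T}$ is nonzero, forcing the matrix to be singular, i.e.\ $X_{11}f\cdot X_{22}f-X_{21}f\cdot X_{12}f=0$ at $\gamma(t)$. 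Together with $X_1 f(\gamma(t))=X_2 f(\gamma(t))=0$, this is exactly the defining system of $\Gamma_f$ given in Remark \ref{coro}, so $\gamma(t)\in\Gamma_f$.

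To finish I would pass from this dense set of parameters to all of $\gamma$ using that $\Gamma_f$ is algebraic, hence closed, while $\gamma$ is continuous. The step that needs the most care is the measure-theoretic one: horizontal curves are only a.e.\ differentiable, so I must argue that on any non-stationary subinterval the parameters with $\dot\gamma(t)\neq 0$ are dense; this is automatic for absolutely continuous horizontal curves, the standard class in sub-Riemannian geometry, since an interval on which $\dot\gamma=0$ a.e.\ is an interval on which $\gamma$ is constant. The algebraic core of the proof—turning tangency of a horizontal direction to $V_f$ into the degeneracy of the Hessian-type matrix $(X_{ij}f)_{i,j=1,2}$—is direct and does not require $f\in B_d$; the role of $B_d$ (together with Lemma \ref{courbelisse}) is only to guarantee that the resulting set $\Gamma_f$ is a smooth curve rather than a singular variety, which is the content already established before this lemma.
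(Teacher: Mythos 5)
Your proposal is correct and follows essentially the same route as the paper: the paper phrases the key step as $\dot\gamma(t)\in T_{\gamma(t)}V_f\cap\Delta_{\gamma(t)}$ being a nonzero vector, which is exactly your linear system $(X_{ij}f)_{\gamma(t)}(a,b)^{\mathsf T}=0$ with $(a,b)\neq 0$ forcing the determinant in Remark \ref{coro} to vanish, and both arguments then propagate to all of $\gamma$ by continuity. Your version is slightly more explicit about the point where $\dot\gamma(t)\neq 0$ is needed (which the paper leaves implicit), but the underlying idea is identical.
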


\begin{proof} By the proof of Lemma \ref{courbelisse}, it follows that $B_d\subset K_d$, so $V_f$ is smooth of dimension $2$ or is empty. Assume that $V_f\not=\emptyset$. Let $\gamma(t)$ be a parametrization of $\gamma$. Since $\gamma(t)$ is almost everywhere differentiable and  $\gamma\subset V_f$, we have $\dot\gamma(t)\in T_{\gamma(t)}V_f$ for almost $t$. On the other hand, since $\gamma$ is horizontal, it follows that $\dot\gamma(t)\in\Delta_{\gamma(t)}$. Hence, for almost $t$, we have  $\dot\gamma(t)\in T_{\gamma(t)}V_f\cap\Delta_{\gamma(t)}$, so $T_{\gamma(t)}V_f\cap\Delta_{\gamma(t)}\not=\{0\}$. This implies that $\gamma(t)\in \Gamma_f$ for almost $t$. By the absolute continuousness of $\gamma(t)$ and the smoothness of $\Gamma_f$ from Lemma \ref{courbelisse}, the lemma follows.
\end{proof}

\begin{re}\label{Impot}{\rm Let $f\in B_d$, if $f$ is non constant on each connected component of $\Gamma_f$, then  $V_f\cap f^{-1}(t)$ does not contain a horizontal curve for every $t$. Moreover, $\Gamma_f$ does not contain a horizontal curve.}
\end{re}

By classical Morse theory, we have 

\begin{lem}\label{MorseC} There exists a semi-algebraic open dense set $J_d\subset\R_d[x]$ such that for every $f\in J_d$, the set of critical points of $f$
$$Cr(f):=\{ \nabla f=(X_1f,X_2f,X_3f,X_4f)=0\}$$
is finite.
\end{lem}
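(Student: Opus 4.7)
The plan is to mirror the proof of Theorem \ref{dimension}, replacing the horizontal gradient $\nabla^h f$ (valued in $\R^2$) by the full Riemannian gradient $\nabla f$ (valued in $\R^4$). Concretely, I would consider the evaluation map
\begin{equation*}
F:\R_d[x]\times\R^4\rightarrow\R^4,\quad (f,x)\mapsto (X_1f(x),X_2f(x),X_3f(x),X_4f(x)),
\end{equation*}
and check that it is a submersion. Writing $f=\alpha_0+\sum_{i=1}^4\alpha_ix_i+\ldots$, the linear part gives $X_1f=\alpha_1+\cdots$, $X_2f=\alpha_2+\alpha_3x_1+\alpha_4x_3+\cdots$, $X_3f=\alpha_3+\cdots$, $X_4f=\alpha_4+\cdots$, so the partial Jacobian of $F$ with respect to $(\alpha_1,\alpha_2,\alpha_3,\alpha_4)$ is upper triangular with $1$'s on the diagonal and hence of full rank $4$. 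In particular $F\pitchfork\{0\}$.

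Theorem \ref{parametre} then produces the set
\begin{equation*}
J_d:=\{f\in\R_d[x]:\nabla f\pitchfork\{0\}\}
\end{equation*}
as a semi-algebraic open dense subset of $\R_d[x]$. For any $f\in J_d$, Proposition \ref{codim} shows that $Cr(f)=(\nabla f)^{-1}(\{0\})$ is either empty or a smooth submanifold of $\R^4$ of codimension $4$, hence of dimension $0$. Since $Cr(f)$ is also semi-algebraic, being the common zero locus of the four polynomials $X_if$, the conclusion that it is finite is immediate: a $0$-dimensional semi-algebraic set has only finitely many connected components (a direct consequence of Theorem \ref{Hardt} applied to the projection to a point), each of which must reduce to a single point.

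No significant obstacle is anticipated. The only real computation to carry out is the rank check on $F$, which is essentially identical to the one performed in the proof of Theorem \ref{dimension}; all the remaining ingredients are direct citations of the semi-algebraic and transversality machinery gathered in Section~2.
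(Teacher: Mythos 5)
Your proof is correct, and it follows exactly the route the paper intends: the paper gives no argument beyond the phrase ``by classical Morse theory,'' and your parametric-transversality computation on $(X_1f,X_2f,X_3f,X_4f)$ (whose partial Jacobian in $(\alpha_1,\dots,\alpha_4)$ is indeed unipotent, hence of rank $4$) is the standard proof of that genericity statement, carried out in the same style as the paper's own proof of Theorem~\ref{dimension}. The final step --- a $0$-dimensional semi-algebraic set has finitely many connected components, each a point --- is also sound.
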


Now we have the following constrain on the intersection $V_f\cap f^{-1}(t)$.

\begin{lem}\label{Constrain} Let $f\in I_d=B_d\cap J_d$, then for every $t$, the intersection $V_f\cap f^{-1}(t)$ contains only curves (smooth or not) and points, i.e., $\dim(V_f\cap f^{-1}(t))\leq 1$.
\end{lem}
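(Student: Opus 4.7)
The plan is to argue by contradiction, exploiting both genericity hypotheses packaged in $I_d$. The case $V_f = \emptyset$ being trivial, I assume $V_f \neq \emptyset$. Since $f \in B_d$, the set $V_f$ is smooth of dimension $2$. Suppose for contradiction that $\dim(V_f \cap f^{-1}(t)) \geq 2$ for some $t$; then this dimension is exactly $2$. Because $V_f \cap f^{-1}(t)$ is a semi-algebraic subset of the smooth $2$-manifold $V_f$ of the same dimension as $V_f$, its top-dimensional smooth stratum is a $2$-dimensional smooth submanifold of $V_f$, hence open in $V_f$. I therefore obtain a non-empty open subset $U \subset V_f$ on which $f \equiv t$.

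The key step is to analyze $\ker d_x f$ for $x \in U$. Since $f$ is locally constant on $V_f$ near $x$, the tangent space satisfies $T_x V_f = T_x U \subset \ker d_x f$. Also, $x \in V_f$ gives $X_1 f(x) = X_2 f(x) = 0$, so $\Delta_x \subset \ker d_x f$. If $\nabla f(x) \neq 0$, then $\ker d_x f$ is a hyperplane of dimension $3$ in $\R^4$ containing the two $2$-planes $T_x V_f$ and $\Delta_x$, forcing $\dim(T_x V_f \cap \Delta_x) \geq 2+2-3 = 1$, so $x \in \Gamma_f$. Otherwise $\nabla f(x) = 0$ and $x \in Cr(f)$. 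Consequently, $U \subset Cr(f) \cup \Gamma_f$.

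Now the hypotheses finish the job: $f \in J_d$ makes $Cr(f)$ finite ($0$-dimensional), while $f \in B_d$ together with Lemma \ref{courbelisse} makes $\Gamma_f$ a smooth algebraic curve or empty (at most $1$-dimensional). So $\dim U \leq 1$, which contradicts the fact that $U$ is a non-empty open subset of the $2$-dimensional manifold $V_f$.

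The step that will demand the most care is justifying the passage from the dimension-$2$ intersection to the existence of an open subset $U \subset V_f$ on which $f \equiv t$; this relies on the standard semi-algebraic fact that a top-dimensional smooth stratum of a semi-algebraic set inside a smooth manifold of equal dimension is open in the ambient manifold. Everything else reduces to a short linear-algebra dimension count inside the hyperplane $\ker d_x f$, which is why the two apparently unrelated conditions defining $B_d$ and $J_d$ combine so cleanly.
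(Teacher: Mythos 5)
Your proof is correct and follows essentially the same route as the paper's: both reduce to the linear-algebra count that $T_xV_f$ and $\Delta_x$ are two $2$-planes inside the $3$-dimensional kernel of $d_xf$ (away from $Cr(f)$), forcing a nontrivial intersection and hence membership in $\Gamma_f$, which contradicts $\Gamma_f$ being a curve. The only difference is cosmetic: you extract an open subset of $V_f$ via the top-dimensional stratum and handle critical points by a case split, whereas the paper picks a smooth $2$-dimensional piece $S$ and discards the finitely many critical points up front.
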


\begin{proof} Suppose that $\dim(V_f\cap f^{-1}(t))> 1$ for some $t$. Let $S\subset V_f\cap f^{-1}(t)$ be a smooth surface of dimension $2$. Since the number of critical points of $f$ is finite, we may assume that $S$ does not contain any critical point of $f$. By the proof of Lemma \ref{courbelisse}, it follows that $B_d\subset K_d$, so $\dim V_f=2$. For each $x\in S$, we have $T_x V_f=T_x S\subset T_x f^{-1}(f(x))$ and $\Delta_x\subset T_x f^{-1}(f(x))$. Note that $\dim\Delta_x=2$ and $\dim T_x f^{-1}(f(x))=3$, so $T_xV_f\cap\Delta_x\not=\{0\}$. Hence $x\in \Gamma_f$. It follows that $S\subset\Gamma_f$. This contradicts to the fact that $\Gamma_f$ is a smooth curve for every $f\in B_d$. The lemma is proved.
\end{proof}

If that $\Gamma_f$ has a connected component which is contained in a fiber of $f$, then we can calculate the tangent vector of this component as follows.

\begin{lem}\label{tangentvector} Let $f\in I_d=B_d\cap J_d$. Assume that $\gamma$ is a connected component of $\Gamma_f$ which is contained in a fiber of $f$. Then for $x\in\gamma$, 
\begin{equation}\label{xi}\xi_f(x)=-[X_{21}f(x)]X_1+[X_{11}f(x)]X_2\end{equation}
is a tangent vector of $\gamma$ at $x$.
\end{lem}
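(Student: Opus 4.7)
The plan is to verify by direct computation that $\xi_f$ kills each of $X_1f$, $X_2f$ and $f$ along $\gamma$, to identify $T_x\gamma$ with the joint kernel of the corresponding three differentials at generic $x\in\gamma$ via Lemma \ref{Constrain}, and then to extend to the remaining points by continuity.

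A short computation, using the convention $X_{ij}f=X_i(X_jf)$, yields
\[
\xi_f(X_1f)\equiv 0,\quad \xi_f(X_2f)=X_{11}f\,X_{22}f-X_{21}f\,X_{12}f,\quad \xi_f(f)=-X_{21}f\,X_1f+X_{11}f\,X_2f.
\]
By (\ref{equagammaf}) the second expression vanishes on $\Gamma_f$ and the third vanishes on $V_f$; hence all three vanish at every $x\in\gamma\subset\Gamma_f\cap f^{-1}(t)\subset V_f$.

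Since $V_f$ is smooth of dimension $2$ (as $f\in B_d\subset K_d$), the vanishing of $\xi_f(X_1f)$ and $\xi_f(X_2f)$ places $\xi_f(x)\in T_xV_f$; at points $x\in\gamma$ with $d_xf\neq 0$, the vanishing of $\xi_f(f)$ gives $\xi_f(x)\in T_xf^{-1}(t)$ as well. By Lemma \ref{Constrain}, $V_f\cap f^{-1}(t)$ has dimension at most $1$, so exactly $1$ along $\gamma$. On the open subset $U\subset\gamma$ where both $d_xf\neq 0$ and $V_f$ meets $f^{-1}(t)$ transversely, one has $T_xV_f\cap T_xf^{-1}(t)=T_x(V_f\cap f^{-1}(t))$, a one-dimensional space containing the one-dimensional $T_x\gamma$; equality then follows, and $\xi_f(x)\in T_x\gamma$. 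Since $\xi_f$ is polynomial and the line field $T\gamma$ is continuous (by smoothness of $\gamma$, Lemma \ref{courbelisse}), this containment passes to the closure and so holds on all of $\gamma$, provided $U$ is dense.

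The chief obstacle is precisely this density. The set $\gamma\setminus U$ consists of the finitely many Riemannian critical points of $f$ on $\gamma$ (finite by $f\in J_d$) together with the points where $x$ is a critical point of the restriction $f|_{V_f}$. Excluding the pathological possibility that every point of $\gamma$ is critical for $f|_{V_f}$ is the delicate step: Lemma \ref{Constrain} prevents $f|_{V_f}$ from being locally constant on a $2$-dimensional neighborhood of $\gamma$ in $V_f$, which, combined with the genericity built into $f\in I_d$, forces the non-transversality locus to be properly contained in the one-dimensional $\gamma$, and hence zero-dimensional; then the continuity argument concludes.
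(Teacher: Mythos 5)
Your computational core is exactly the paper's: the paper forms the $3\times 4$ Jacobian of $(\nabla^hf,f-t)$ and applies it to $\xi_f(x)=(-X_{21}f,X_{11}f,0,0)$, obtaining $(0,Gf(x),0)$, which vanishes along $\gamma\subset\Gamma_f\cap V_f\cap f^{-1}(t)$ --- these are precisely the three identities $\xi_f(X_1f)=0$, $\xi_f(X_2f)=Gf$, $\xi_f(f)=-X_{21}f\,X_1f+X_{11}f\,X_2f$ that you list. The divergence is in what follows. The paper stops at this point and simply asserts that membership in $\ker d_x(\nabla^hf,f-t)$ makes $\xi_f(x)$ a tangent vector of $V_f\cap f^{-1}(t)$, hence of $\gamma$. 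You correctly observe that this inference requires the kernel to be $1$-dimensional at $x$, i.e.\ $T_xV_f\not\subset\ker d_xf$, and you try to supply this transversality on a dense open subset $U$ of $\gamma$ and finish by continuity of the line field $T\gamma$. Flagging the issue is to your credit, but your patch does not close.

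The gap is the density of $U$. Its complement in $\gamma$ is, apart from the finitely many points of $Cr(f)$, the set of $x\in\gamma$ at which $d_xf$ vanishes on all of $T_xV_f$, i.e.\ the critical locus of the restriction $f|_{V_f}$ met with $\gamma$; nothing in $I_d=B_d\cap J_d$ controls this set. $B_d$ gives smoothness of $\Gamma_f$ and $J_d$ gives finiteness of $Cr(f)$, while the critical set of $f|_{V_f}$ is a different object, and this paper (unlike the splitting-distribution case of \cite{DKO}) never proves that $f|_{V_f}$ is Morse. Lemma \ref{Constrain} only forbids a fiber of $f|_{V_f}$ from being $2$-dimensional; it does not forbid the critical set of $f|_{V_f}$ from containing the whole curve $\gamma$ (compare $g(u,v)=v^2$ on a surface: no fiber is $2$-dimensional, yet the critical set is the curve $\{v=0\}$, which lies inside the single fiber $\{g=0\}$). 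Since $\gamma$ sits in one fiber, $d_xf$ already annihilates $T_x\gamma$ at every point, so its vanishing on the remaining direction of $T_xV_f$ is one extra scalar condition along $\gamma$ that your cited lemmas do not exclude from holding identically; the appeal to ``genericity built into $f\in I_d$'' is an assertion, not an argument. (A smaller point: ``properly contained in $\gamma$, hence zero-dimensional'' needs the additional remark that this locus is the zero set of a real algebraic function on the connected smooth curve $\gamma$, so it is either all of $\gamma$ or finite.) To be fair, the paper's own proof does not address any of this; your version makes the difficulty visible without resolving it.
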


\begin{proof} Note that by Lemma \ref{Constrain} and by assumption, $\gamma$ is a component of $V_f\cap f^{-1}(t)$ for some $t$. Now $(d_x(\nabla^hf,f-t))(\xi_f(x))=$
\begin{displaymath} 
\left (\begin{array}{cccc}
$$X_{11}f & X_{21}f & X_{31}f & X_{41}f$$ \\
$$X_{12}f & X_{22}f & X_{32}f & X_{42}f$$ \\
$$0 & 0 & X_3f & X_4f$$ \\
\end{array}\right)_x.
\left (\begin{array}{c}
$$-X_{21}f$$ \\
$$X_{11}f$$ \\
$$0$$ \\
$$0$$ \\
\end{array}\right)_x=
\left (\begin{array}{c}
$$0$$ \\
$$X_{11}fX_{22}f-X_{12}fX_{21}f$$ \\
$$0$$ \\
\end{array}\right)_x.
\end{displaymath}
Then $(d_x(\nabla^hf,f-t))(\xi_f(x))=0$ by Remark \ref{coro}, for all $x\in \gamma$. This implies that $\xi_f(x)$ is a tangent vector of $V_f\cap f^{-1}(t)$ at $x$. Hence $\xi_f(x)$ is a tangent vector of $\gamma$ at $x$.
\end{proof}


The vector $\xi_f(x)$ does not give a tangent direction of a connected component of $\Gamma_f$ which is contained in a fiber of $f$ if $\xi_f(x)=0$. So we are going to show that for a generic polynomial, $\xi_f(x)\not=0$ on $\Gamma_f$ excepted at finitely many points.

Recall that $\Gamma_f=\{x\in\R^4:X_1f=X_2f=X_{11}fX_{22}f-X_{21}fX_{12}f\}.$ Let 
\begin{displaymath}\begin{array}{lll}
$$\Omega_f^1&:=&\{x\in \Gamma_f:X_{11}f=0\}$$\\
$$ &=&\{x\in\R^4: X_1f=X_2f=X_{11}fX_{22}f-X_{21}fX_{12}f=X_{11}f=0\}$$\\
$$ &=&\{x\in\R^4: X_1f=X_2f=X_{11}f=X_{21}fX_{12}f=0\}$$\\
$$ &=&\{x\in\R^4: X_1f=X_2f=X_{11}f=X_{21}f=0\}\cup$$\\
$$ &&\{x\in\R^4: X_1f=X_2f=X_{11}f=X_{12}f=0\}$$\\
$$ &=:&S_f^1\cup S_f^2,$$
\end{array}\end{displaymath}
\begin{displaymath}\begin{array}{lll}
$$\Omega_f^2&:=&\{x\in \Gamma_f:X_{21}f=0\}$$\\
$$ &=&\{x\in\R^4: X_1f=X_2f=X_{11}fX_{22}f-X_{21}fX_{12}f=X_{21}f=0\}$$\\
$$ &=&\{x\in\R^4: X_1f=X_2f=X_{21}f=X_{11}fX_{22}f=0\}$$\\
$$ &=&\{x\in\R^4: X_1f=X_2f=X_{21}f=X_{11}f=0\}\cup$$\\
$$ &&\{x\in\R^4: X_1f=X_2f=X_{21}f=X_{22}f=0\}$$\\
$$ &=:&S_f^1\cup S_f^3.$$
\end{array}\end{displaymath}
By using the same technique used in the proof of Lemma \ref{courbelisse}, we can prove that there exists a semi-algebraic open dense set $D_d\subset\R_d[x]$ such that for each $f\in D_d$, each of the sets $S_f^1,S_f^2,S_f^3$ are finite. Then, we have the following:

\begin{lem}\label{Omega} There exists a semi-algebraic open dense set $D_d\subset\R_d[x]$ such that for every $f\in D_d$, the set $\Omega_f=\Omega_f^1\cup\Omega_f^2$ is finite, so $\xi_f(x)\not=0$ for $x\not\in\Omega_f$. \end{lem}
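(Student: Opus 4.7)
The plan is to reduce the lemma to three independent applications of the transversality-with-parameters technique already used in the proof of Lemma \ref{courbelisse}, one for each of the algebraic sets
\begin{align*}
S_f^1 &= \{x : X_1f = X_2f = X_{11}f = X_{21}f = 0\},\\
S_f^2 &= \{x : X_1f = X_2f = X_{11}f = X_{12}f = 0\},\\
S_f^3 &= \{x : X_1f = X_2f = X_{21}f = X_{22}f = 0\},
\end{align*}
whose union is exactly $\Omega_f = \Omega_f^1\cup\Omega_f^2$ by the set-theoretic decomposition carried out just before the statement.

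For each $j\in\{1,2,3\}$ I would introduce the evaluation map $\Theta^j:\R^4\times\R_d[x]\to\R^4$ whose four components are the polynomials defining $S_f^j$. The key step is to select, for each $j$, four coefficients of $f$ among $\alpha_1,\alpha_2,\alpha_3,\beta_{11},\beta_{12},\beta_{22}$ with respect to which the corresponding $4\times 4$ minor of the Jacobian of $\Theta^j$ is upper triangular with $1$'s on the diagonal. Natural choices are $(\alpha_1,\alpha_2,\beta_{11},\beta_{12})$ for $\Theta^1$; $(\alpha_1,\alpha_2,\beta_{11},\alpha_3)$ for $\Theta^2$, using the fact that $\alpha_3$ enters $X_{12}f$ via $X_1(\alpha_3x_1)=\alpha_3$; and $(\alpha_1,\alpha_2,\beta_{12},\beta_{22})$ for $\Theta^3$. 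In each case the triangular minor has determinant $1$, so $\Theta^j$ is a submersion and in particular transverse to $\{0\}\subset\R^4$. By the transversality Theorem \ref{parametre}, the set
$$T_d^j:=\{f\in\R_d[x]:\Theta^j(\cdot,f)\pitchfork\{0\}\}$$
is semi-algebraic open dense in $\R_d[x]$. For $f\in T_d^j$, Proposition \ref{codim} forces $S_f^j$ to be either empty or a smooth submanifold of codimension $4$ in $\R^4$, hence of dimension $0$; since a $0$-dimensional semi-algebraic set is finite, $S_f^j$ itself is finite.

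Setting $D_d:=T_d^1\cap T_d^2\cap T_d^3$ then produces a semi-algebraic open dense subset of $\R_d[x]$ on which $\Omega_f=S_f^1\cup S_f^2\cup S_f^3$ is a finite union of finite sets, hence finite. It remains to verify the second assertion. For $x\in\Gamma_f\setminus\Omega_f$, the definitions of $\Omega_f^1$ and $\Omega_f^2$ force $X_{11}f(x)\neq 0$ and $X_{21}f(x)\neq 0$; since $X_1$ and $X_2$ are pointwise linearly independent, the vector $\xi_f(x)=-[X_{21}f(x)]X_1+[X_{11}f(x)]X_2$ is nonzero.

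The main delicate point, and essentially the only place where the specific form of the standard Engel structure is used, is the parameter selection in the Jacobian computation: one has to check, via $X_2x_2=1$, $X_2x_3=x_1$, $X_2x_4=x_3$, that each of the four defining polynomials of each $\Theta^j$ is controlled by a distinct low-order coefficient of $f$ not affecting the preceding ones, so that the chosen $4\times 4$ minor is genuinely triangular. Once this bookkeeping is verified, the rest is a direct application of Theorem \ref{parametre} and Proposition \ref{codim}, so I do not anticipate any substantive obstacle.
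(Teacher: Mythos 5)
Your proposal is correct and follows essentially the same route as the paper, which simply asserts that "the same technique used in the proof of Lemma \ref{courbelisse}" yields the finiteness of $S_f^1$, $S_f^2$, $S_f^3$ and hence of $\Omega_f$. Your explicit choices of coefficients $(\alpha_1,\alpha_2,\beta_{11},\beta_{12})$, $(\alpha_1,\alpha_2,\beta_{11},\alpha_3)$ and $(\alpha_1,\alpha_2,\beta_{12},\beta_{22})$ do produce unit upper-triangular $4\times 4$ minors consistent with the Jacobian displayed in Lemma \ref{courbelisse}, so your write-up supplies exactly the bookkeeping the paper leaves implicit.
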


Let 
\begin{equation}\label{Gf}Gf:=X_{11}fX_{22}f-X_{21}fX_{12}f.\end{equation}
Then $\Gamma_f=\{X_1f=X_2f=Gf=0\}$. 

Now we are able to describe numerically the condition of tangency of $\Gamma_f$ to a fiber of $f$.

\begin{lem}\label{tangency} Let $f\in I_d\cap D_d$. Let $x\in\Gamma_f-(\Omega_f\cup Cr(f))$ where $Cr(f)$ is still the set of critical points of $f$, which is finite. Then $[d_x(Gf)](\xi_f(x))=0$ if and only if $\Gamma_f$ is tangent to the fiber $f^{-1}(f(x))$ at $x$.
\end{lem}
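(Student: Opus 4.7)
The plan is to identify the tangent line $T_x\Gamma_f$ explicitly and compare it to the tangent hyperplane $T_xf^{-1}(f(x))$, using $\xi_f(x)$ as the natural candidate direction. Since $f\in B_d$, the set $V_f$ is a smooth surface and $\Gamma_f$ a smooth curve by Lemma~\ref{courbelisse}, so $T_x\Gamma_f$ is a $1$-dimensional subspace of the $2$-plane $T_xV_f$. Moreover, $\Gamma_f$ is cut out of $V_f$ by the single equation $Gf=0$ with preservation of smoothness, hence $d_x(Gf)|_{T_xV_f}\neq 0$ and
\[
T_x\Gamma_f \;=\; \ker\bigl(d_x(Gf)|_{T_xV_f}\bigr).
\]

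I would first verify three properties of $\xi_f(x)$ by direct computation, using $X_i(X_jf)=X_{ij}f$. Namely $d_x(X_1f)(\xi_f)=-X_{21}f\cdot X_{11}f+X_{11}f\cdot X_{21}f=0$ and $d_x(X_2f)(\xi_f)=-X_{21}f\cdot X_{12}f+X_{11}f\cdot X_{22}f=Gf(x)=0$ (the latter because $x\in\Gamma_f$), so $\xi_f(x)\in T_xV_f$; next $d_xf(\xi_f)=-X_{21}f\cdot X_1f+X_{11}f\cdot X_2f=0$ since $X_1f=X_2f=0$ on $V_f$, so $\xi_f(x)\in T_xf^{-1}(f(x))$; and $\xi_f(x)\neq 0$ because $x\notin\Omega_f$ by Lemma~\ref{Omega}.

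The forward direction is then immediate: if $d_x(Gf)(\xi_f)=0$, then $\xi_f\in T_x\Gamma_f$, and since $T_x\Gamma_f$ is a line and $\xi_f\neq 0$ we get $T_x\Gamma_f=\R\,\xi_f(x)\subset T_xf^{-1}(f(x))$, that is, $\Gamma_f$ is tangent to the fiber at $x$. For the converse, I would examine the intersection $W:=T_xV_f\cap T_xf^{-1}(f(x))$: since $x\notin Cr(f)$, $T_xf^{-1}(f(x))$ is a genuine hyperplane, so $\dim W\in\{1,2\}$. When $\dim W=1$, the fact that $\xi_f(x)\in W$ forces $W=\R\,\xi_f(x)$, so the hypothesis $T_x\Gamma_f\subset T_xf^{-1}(f(x))$ combined with $T_x\Gamma_f\subset T_xV_f$ yields $T_x\Gamma_f\subset W=\R\,\xi_f(x)$, hence $T_x\Gamma_f=\R\,\xi_f(x)$ and $d_x(Gf)(\xi_f)=0$.

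The main obstacle is the exceptional configuration $\dim W=2$, i.e.\ $x$ being a critical point of $f|_{V_f}$, where the tangency of $\Gamma_f$ to the fiber becomes automatic while $d_x(Gf)(\xi_f)$ need not vanish a priori. I would handle it by a further application of the transversality theorem with parameters, in the same spirit as the constructions of $B_d$, $J_d$ and $D_d$, shrinking $I_d\cap D_d$ to an open dense semi-algebraic subset on which the $0$-dimensional locus $Cr(f|_{V_f})$ and the $1$-dimensional $\Gamma_f$ are disjoint inside the $2$-dimensional $V_f$ (a generic expected-codimension $1+1-2=0$ intersection that one forces to be empty). Once this exceptional case is eliminated, the analysis above gives the equivalence on the entire $\Gamma_f\setminus(\Omega_f\cup Cr(f))$.
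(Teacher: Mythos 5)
Your main argument coincides with the paper's: the identities $d_x(X_1f)(\xi_f)=0$, $d_x(X_2f)(\xi_f)=Gf(x)=0$ and $d_xf(\xi_f)=0$ are exactly the matrix computations carried out in Lemmas \ref{tangentvector} and \ref{tangency}, the forward implication is handled identically, and in the converse the paper's assertion that ``the two non-zero vectors $\eta$ and $\xi_f(x)$ have to be linearly dependent'' is precisely your case $\dim W=1$. You are right to flag that this assertion requires $\dim\bigl(T_xV_f\cap T_xf^{-1}(f(x))\bigr)=1$, i.e.\ that $x$ is not a critical point of $f|_{V_f}$; the paper does not justify this, and Lemma \ref{Constrain} (which bounds the dimension of the \emph{set} $V_f\cap f^{-1}(t)$, not of the intersection of tangent spaces at a point) does not supply it.

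However, your proposed repair of the exceptional case does not work. The loci $Cr(f|_{V_f})$ and $\Gamma_f$ are never in general position inside $V_f$, so the expected-codimension count $1+1-2=0$ does not apply and no perturbation can separate them: at any $x\in V_f\setminus Cr(f)$ one has $X_1f(x)=X_2f(x)=0$, hence $\Delta_x\subset\ker d_xf=T_xf^{-1}(f(x))$; if moreover $x\in Cr(f|_{V_f})$, then $T_xV_f\subset\ker d_xf$ as well, and two $2$-planes inside the $3$-plane $\ker d_xf$ must meet in a line, so $T_xV_f\cap\Delta_x\neq\{0\}$ and $x\in\Gamma_f$ automatically. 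Thus $Cr(f|_{V_f})\setminus Cr(f)\subset\Gamma_f$ for \emph{every} $f$, and the transversality theorem with parameters cannot render this intersection empty. What a genericity argument can plausibly give is that $Cr(f|_{V_f})\setminus Cr(f)$ is finite (a Morse-type statement in the spirit of \cite{DKO}); one would then either add this finite set to the excluded locus $\Omega_f\cup Cr(f)$ in the statement of the lemma --- which is compatible with how the lemma is used later, where only all-but-finitely-many points of each component of $\Gamma_f$ are needed --- or show separately that $[d_x(Gf)](\xi_f(x))=0$ at such points, which neither you nor the paper establishes. As written, your final paragraph replaces one unjustified step by another.
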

\begin{proof} Note that $\xi_f(x)\not=0$ and $\xi_f(x)$ is a tangent vector of $V_f\cap f^{-1}(f(x))$ at $x$. 

Suppose that $[d_x(Gf)](\xi_f(x))=0$. Let us prove that $\xi_f(x)$ is a tangent vector of $\Gamma_f$ at $x$. We have
$[d_x(X_1f,X_2f,Gf)](\xi_f(x))=$
\begin{displaymath}
=\left(
\begin{array}{cccc}
$$X_{11}f& X_{21}f& X_{31}f& X_{41}f$$\\
$$X_{12}f& X_{22}f& X_{32}f& X_{42}f$$\\
$$X_1G& X_2G& X_3G& X_4G$$\\
\end{array}\right)_x\left(
\begin{array}{c}
$$-X_{21}f$$\\
$$X_{11}f$$\\
$$0$$\\
$$0$$\\
\end{array}\right)_x=\left(
\begin{array}{c}
$$0$$\\
$$Gf(x)$$\\
$$[d_x(Gf)](\xi_f(x))$$\\
\end{array}\right).
\end{displaymath}
Hence $[d_x(X_1f,X_2f,Gf)](\xi_f(x))=0$, which means that $\xi_f(x)$ is a tangent vector of $\Gamma_f$ at $x$. Therefore, $\Gamma_f$  is tangent to the fiber $f^{-1}(f(x))$ at $x$. 

Now suppose that $\Gamma_f$ is tangent to the fiber $f^{-1}(f(x))$ at $x$. Let $\eta$ be a non-zero tangent vector of $\Gamma_f$ at $x$, then $[d_x(Gf)](\eta)=0$. Since $\Gamma_f\subset V_f$, then $\eta\in T_xV_f$. On the other hand, by the assumption, it follows that $\eta\in T_xf^{-1}(f(x)).$ Consequently, $\eta\in T_x(V_f\cap f^{-1}(f(x)))$. Moreover, the two non-zero vectors $\eta$ and $\xi_f(x)$ have to be linearly dependent. This implies that $[d_x(X_1f,X_2f,Gf)](\xi_f(x))=0$.
\end{proof}


For $f\in\R_d[x]$, we define the norm 
$$\displaystyle ||f||:=(\sum_i c_i^2)^{1/2}$$
where $c_i$ is a coefficient of $f$ and the sum is taken over the set of coefficients of $f$. Then the distance between $f,h\in\R_d[x]$ is defined by $||f-h||$. 

Let $\pi:\R^4\times\R_d[x]\rightarrow\R_d[x]$ be the projection map. Let $S:=\Theta^{-1}(\{(0,0)\}\times\Sigma_2)$ where $\Theta$ is defined in Lemma \ref{courbelisse} and $\Sigma_2$ is still the algebraic subset of $M_2$, constituted of degenerate matrices. Note that if $(x,f)\in S$, then $x\in\Gamma_f$. Let $\pi_S$ be the restriction of $\pi$ on $S$, then a fiber $(\pi_S)^{-1}(f)$ of $\pi_S$ is just $\Gamma_f\times\{f\}$. By Theorem \ref{Hardt}, there exists a semi-algebraic subset $Z$ of codimension $1$ of $\R_d[x]$ such that $f$ is a semi-algebraic locally trivial fibration over each connected component of $T_d:=\R_d[x]-Z$. It is clear that $T_d$ contains a semi-algebraic open dense subset of $\R_d[x]$. Let $A_d\subset I_d\cap T_d$ such that $A_d$ is semi-algebraic open dense.  Now fix a polynomial $f\in A_d$. The set $V_f$ is algebraic, smooth, $dim V_f=2$ or $V_f$ is empty, moreover $\Gamma_f$ is algebraic, smooth, $dim\Gamma_f=1$ or $\Gamma_f$ is empty. Suppose that $\Gamma_f\not=\emptyset$.

For each $f\in A_d\cap D_d$, since $\Gamma_f$ is an algebraic set, it has only a finite number of connected components $\Gamma^1_f,\ldots,\Gamma^s_f$, this number is bounded uniformly by $c(d)$ where $c(d)$ is the constant defined in Remark \ref{MT}. Let $\kappa(f)$ be the number of connected components of $\Gamma_f$ such that each of them is contained in a fiber of $f$ (not necessarily the same) and let $\lambda(f)$ be the number of connected components of $\Gamma_f$ which are not contained in any fiber of $f$. So $\lambda(f)+\kappa(f)\leq c(d)$. Now let us fix a polynomial $f\in A_d\cap D_d$. Suppose that $\kappa(f)>0$. Denote $\epsilon:=(\alpha,\beta,\gamma)$ and set
\begin{equation}\label{fep}\displaystyle f_{\epsilon}:=f+\alpha x_2+\frac{\beta}{2}x_2^2+\gamma x_4,\end{equation}
a perturbation of $f$ by a form of degree $2$. Let $|\epsilon|:=\max\{|\alpha|,|\beta|,|\gamma|\}$ be the "size" of the perturbation. If $\epsilon$ is small enough, by the triviality given by Theorem \ref{Hardt}, for each connected component $\theta$ of $\Gamma_f$, there exists a connected component $\theta_\epsilon$ of $\Gamma_{f_\epsilon}$ which is close to $\theta$. We call $\theta_\epsilon$ the connected component \textbf{corresponding} to $\theta$.

The following lemma is the key to prove the density in the theorem \ref{noncontient}.

\begin{lem}\label{redu} There exists $\epsilon=(\alpha,\beta,\gamma)$ such that $\lambda(f_\epsilon)>\lambda(f)$. Moreover $$\lambda(f_{t\epsilon})=\lambda(f_{(t\alpha,t\beta,t\gamma)})>\lambda(f)$$
for every $t\in(0,1]$.
\end{lem}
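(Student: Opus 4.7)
The perturbation $h_\epsilon := \alpha x_2 + \tfrac{\beta}{2}x_2^2 + \gamma x_4$ satisfies $X_1 h_\epsilon = 0$ and $X_2 h_\epsilon = \alpha + \beta x_2 + \gamma x_3$, so among the second-order expressions only $X_{22}f_\epsilon = X_{22}f + \beta + \gamma x_1$ is altered; in particular $\xi_{f_\epsilon} = \xi_f$ and $Gf_\epsilon = Gf + X_{11}f\,(\beta + \gamma x_1)$. Using $\xi_f(x_1) = -X_{21}f$, the obstruction $\Phi_\epsilon := \xi_{f_\epsilon}(Gf_\epsilon)$, whose non-vanishing on $\theta_\epsilon$ characterises (by Lemma \ref{tangency}) that $\theta_\epsilon$ is not contained in a fiber of $f_\epsilon$, decomposes as
\[
\Phi_\epsilon \;=\; \Phi_0 + \beta\,A + \gamma\,B,\qquad A := \xi_f(X_{11}f),\quad B := x_1\,\xi_f(X_{11}f) - X_{11}f\,X_{21}f.
\]
Fix any component $\theta$ of $\Gamma_f$ contained in a fiber of $f$; by Lemma \ref{tangency}, $\Phi_0|_\theta \equiv 0$.

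The algebraic heart of the plan is to produce $(\beta_0,\gamma_0) \in \R^2$ with $(\beta_0 A + \gamma_0 B)|_\theta \not\equiv 0$. If no such pair existed, then $A|_\theta \equiv 0$ and $B|_\theta \equiv 0$; substituting $A=0$ into the definition of $B$ would force $X_{11}f\cdot X_{21}f \equiv 0$ on $\theta$. Yet for $f \in D_d$ Lemma \ref{Omega} ensures that $\{X_{11}f = 0\} \cap \Gamma_f = \Omega_f^1$ and $\{X_{21}f = 0\} \cap \Gamma_f = \Omega_f^2$ are both finite, so $X_{11}f\cdot X_{21}f$ can vanish at only finitely many points of the infinite smooth curve $\theta$, a contradiction. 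This yields the desired $(\beta_0,\gamma_0)$.

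To transfer the non-vanishing from $\theta$ to $\theta_\epsilon$, consider
\[
E \;:=\; \{\epsilon \in \R^3 : \Phi_\epsilon|_{\theta_\epsilon} \equiv 0\}.
\]
Since $f \in A_d \subseteq T_d$, Theorem \ref{Hardt} provides a semi-algebraic parametrization $\psi_\epsilon : \theta \to \theta_\epsilon$ for $\epsilon$ in a neighbourhood of $0$, so $E$ is semi-algebraic. If $0$ were interior to $E$, expanding the identity $\Phi_\epsilon \circ \psi_\epsilon \equiv 0$ to first order along $\epsilon(t) = t(0,\beta_0,\gamma_0)$ and using the linear dependence on $\epsilon$ of both $\beta A + \gamma B$ and of $\partial_\epsilon \psi_\epsilon|_{\epsilon = 0}$ would force $(\beta_0 A + \gamma_0 B)|_\theta \equiv 0$, contradicting the previous step. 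Hence $0 \notin \mathrm{int}(E)$. For the ``moreover'' statement, $E$ is a proper semi-algebraic subset of $\R^3$; by Theorem \ref{DSA} a generic direction $(\alpha_0,\beta_0,\gamma_0)$ gives a ray meeting $E$ only at isolated parameters, and rescaling this direction small enough the segment $\{t\epsilon : t \in (0,1]\}$ avoids $E$ entirely while staying in a neighbourhood of $0$ where (by openness) the $\lambda(f)$ already not-in-fiber components $\omega_j$ persist as not-in-fiber. This yields $\lambda(f_{t\epsilon}) \geq \lambda(f) + 1$ for every $t \in (0,1]$.

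The most delicate step is the transfer from $\theta$ to $\theta_\epsilon$: because both $\Phi_0$ and the correction $\beta A + \gamma B$ evaluated along the $O(|\epsilon|)$-perturbed curve $\theta_\epsilon$ are of size $O(|\epsilon|)$, a naive analytic first-order expansion does not rule out cancellation between the two terms. The key is to exploit the semi-algebraicity of $E$ together with the linear-in-$\epsilon$ structure of $\beta A + \gamma B$: the infinitesimal analysis is reduced to an identity on $\theta$, where the finiteness of $\Omega_f^1 \cup \Omega_f^2$ established above provides the contradiction.
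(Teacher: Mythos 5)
Your setup coincides with the paper's up to the decisive step: same perturbation, same identities $Gf_\epsilon=Gf+(\beta+\gamma x_1)X_{11}f$ and $\xi_{f_\epsilon}=\xi_f$, same use of the finiteness of $\Omega_f$ (Lemma \ref{Omega}) to find points of $\theta$ where $X_{11}f\cdot X_{21}f\neq0$, and your existence argument for a pair $(\beta_0,\gamma_0)$ with $(\beta_0A+\gamma_0B)|_\theta\not\equiv0$ is correct. The genuine gap is the transfer from $\theta$ to $\theta_\epsilon$, and you have diagnosed it yourself without curing it: along the perturbed curve $\theta_\epsilon$ the term $\Phi_0=[d(Gf)](\xi_f)$ is no longer zero but only $O(|\epsilon|)$, of the same order as $\beta A+\gamma B$, so the first-order expansion of $\Phi_\epsilon\circ\psi_\epsilon\equiv0$ yields only the identity $[d\Phi_0]\bigl(\partial_\epsilon\psi_\epsilon|_{\epsilon=0}\bigr)+(\beta_0A+\gamma_0B)=0$ on $\theta$, and nothing prevents the first term (which involves the transversal displacement of the curve, not controlled by your data) from cancelling the second. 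Declaring that "semi-algebraicity of $E$ together with the linear-in-$\epsilon$ structure" resolves this is not an argument: semi-algebraicity of $E$ gives no information about whether $0\in\mathrm{int}(E)$, which is precisely the point at issue. The "moreover" clause and the persistence of the already-transverse components inherit the same uncertainty (the latter also needs a uniform bound over $i$, which you only gesture at via "openness").

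The paper closes exactly this gap by a pointwise device you should note: it fixes $b\in\theta-(\Omega_f\cup Cr(f))$ and imposes on $\epsilon=(\alpha,\beta,\gamma)$ the two linear constraints $\alpha+b_2\beta+b_3\gamma=0$ and $\beta+b_1\gamma=0$ of equation (\ref{direction}), with $\gamma\neq0$ free and small. The first constraint forces $X_2f_\epsilon(b)=0$, hence $b\in\Gamma_{f_\epsilon}$: the test point itself survives the perturbation and no curve-to-curve transfer is needed. The second kills the term $(\beta+\gamma b_1)[d_b(X_{11}f)](\xi_f(b))$ in the expansion (\ref{calculdif}), so that $[d_b(Gf_\epsilon)](\xi_{f_\epsilon}(b))=-\gamma\,X_{11}f(b)\,X_{21}f(b)\neq0$ holds as an exact identity, not an approximation, and it persists verbatim for $t\epsilon$, $t\in(0,1]$, which gives the "moreover" statement for free; the already-transverse components are then handled by the explicit quantitative estimate of Claim \ref{cl3}. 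Note that your chosen direction has $\alpha=0$, which is incompatible with pinning a point of $\theta$ to $\Gamma_{f_\epsilon}$; to repair your proof, either establish rigorously that the cancellation identity above cannot hold along all of $\theta$ (this needs new input), or adopt the paper's constraint on $\epsilon$.
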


\begin{proof} We write $\displaystyle\Gamma_f=\bigcup_{i=1}^{\lambda(f)}\Gamma_i\cup\bigcup_{i=\lambda(f)+1}^{\lambda(f)+\kappa(f)}\Gamma_i,$ where

\begin{enumerate}
\item[(i)] $\Gamma_f^1,\ldots,\Gamma_f^{\lambda(f)}$ are the connected components of $\Gamma_f$ which are not contained in any fiber of $f$,
\item[(ii)] $\Gamma_f^{\lambda(f)+1},\ldots,\Gamma_f^{\lambda(f)+\kappa(f)}$ are the connected components of $\Gamma_f$ which are contained in a fiber of $f$ (not necessarily the same).
\end{enumerate}

By Lemma \ref{tangency}, it follows that $\xi_f(x)\not=0$ and $[d_x(Gf)](\xi_f(x))=0$ for every $x\in\dis\bigcup_{i=\lambda(f)+1}^{\lambda(f)+\kappa(f)}\Gamma_i-(\Omega_f\cup Cr(f)).$

Let us fix $\theta=\Gamma_f^{\lambda(f)+1}$, a connected component of $\Gamma_f$ which is contained in a fiber of $f$. We look for a perturbation $f_\epsilon$ satisfying the following conditions:

\begin{enumerate}
\item[(c1)] The connected component $\theta_\epsilon$ of $\Gamma_{f_\epsilon}$ corresponding to $\theta$ is not contained in any fiber of $f_\epsilon$.
\item[(c2)] The connected components of $\Gamma_{f_\epsilon}$ corresponding to $\Gamma_f^1,\ldots,\Gamma_f^{\lambda(f)}$ are not contained in any fiber of $f_\epsilon$ neither.
\end{enumerate}

Firstly we establish c1). By Lemma \ref{Omega}, the set $\Omega_f$ is finite. Since $f\in J_d$, by Lemma \ref{MorseC}, $f$ is a Morse function, then $Cr(f)$ is finite. Consequently, $\theta-(\Omega_f\cup Cr(f))\not=\emptyset$. Let us take a point $b=(b_1,b_2,b_3,b_4)\in\theta-(\Omega_f\cup Cr(f))$. Let $\epsilon=(\alpha,\beta,\gamma)$ be a solution of the following system of equations:
\begin{equation}\label{direction}\left\{
\begin{array}{r}
\displaystyle\alpha+b_2\beta+b_3\gamma=0\\
\beta+b_1\gamma=0\\
\end{array}\right.\Leftrightarrow
\left\{
\begin{array}{l}
\displaystyle\alpha=-b_2\beta-b_3\gamma=(b_1b_2-b_3)\gamma\\
\beta=-b_1\gamma\\
\end{array}\right.
\end{equation}
such that $\gamma\not=0.$ Note that by choosing $\gamma$ small, we can make the size of $\epsilon$ arbitrary small. Recall that $\displaystyle X_3=\frac{\partial}{\partial x_3}, X_4=\frac{\partial}{\partial x_4}$. Now we have
\begin{equation}
\begin{array}{ccll}
$$X_if_\epsilon&=&X_if \hskip 1.1cm \text{ for } i\in\{1,3,4\},$$\\
$$X_2f_\epsilon&=&X_2f+\alpha+\beta x_2+\gamma x_3,$$\\
$$X_{ij}f_\epsilon&=&X_{ij}f \hskip 1cm \text{ for }(i,j)\in\{1,4\}\times\{1,4\},(i,j)\not=(2,2),(3,2),$$\\
$$X_{22}f_\epsilon&=&X_{22}f+\beta+\gamma x_1.$$
\end{array}
\end{equation}
We remark the following properties.

\begin{cla}\label{cl1}The perturbation $f_\epsilon$ fixes the point $b$ in $\Gamma_f$, i.e., $b\in\Gamma_{f_\epsilon}$.
\end{cla}

\begin{proof} We have 
$$Gf_\epsilon=X_{11}f_\epsilon X_{22}f_\epsilon-X_{21}f_\epsilon X_{12}f_\epsilon=X_{11}f(X_{22}f+\beta+\gamma x_1)-X_{21}fX_{12}f=Gf+(\beta+\gamma x_1)X_{11}f.$$
By our choice of $\epsilon$, it is easy to see that $X_1f_\epsilon(b)=X_2f_\epsilon(b)=Gf_\epsilon(b)=0$. This proves the claim.
\end{proof}

\begin{cla}\label{cl2} The connected component $\theta_\epsilon$ of $\Gamma_{f_\epsilon}$ which contains $b$ is not  contained in any fiber of $f_\epsilon$. In fact $\theta_\epsilon\pitchfork_b f_\epsilon^{-1}(f_\epsilon(b))$.
\end{cla}

\begin{proof} First of all, we prove that $b\not\in \Omega_{f_\epsilon}\cup Cr(f_\epsilon)$. Since $X_if_\epsilon(b)=X_if(b)$ for $i=1,2,3,4$, we deduce that $\nabla f_\epsilon(b)=\nabla f(b)$ where $\nabla f_\epsilon$ and $\nabla f$ denote respectively the (Riemannian) gradient of $f_\epsilon$ and $f$. Since $b\not\in Cr(f)$, $b$ is not a (Riemannian) critical point of $f$ and hence $b$ is not a (Riemannian) critical point of $f_\epsilon$. This implies that $f_\epsilon^{-1}f_\epsilon((b))$ is smooth at $b$. Moreover $T_bf_\epsilon^{-1}f_\epsilon((b))=T_bf^{-1}f((b))$. On the other hand, since $X_{ij}f_\epsilon(b)=X_{ij}f(b)$ $(i=1,2,3,4,j=1,2)$ by our choice of $\epsilon$, it follows that $d_b(\nabla^hf)=$
\begin{displaymath}
\begin{pmatrix}
$$X_{11}f&X_{21}f&X_{31}f&X_{41}f$$\\
$$X_{12}f&X_{22}f&X_{32}f&X_{42}f$$\\
\end{pmatrix}_b=
\begin{pmatrix}
$$X_{11}f_\epsilon&X_{21}f_\epsilon&X_{31}f_\epsilon&X_{41}f_\epsilon$$\\
$$X_{12}f_\epsilon&X_{22}f_\epsilon&X_{32}f_\epsilon&X_{42}f_\epsilon$$\\
\end{pmatrix}_b=d_b(\nabla^hf_\epsilon).
\end{displaymath}

Hence $T_bV_f=Kerd_b(\nabla^hf)=Kerd_b(\nabla^hf_\epsilon)=T_bV_{f_\epsilon}.$ Note that $X_{11}f_\epsilon(b)=X_{11}f(b)\not=0$ and $X_{21}f_\epsilon(b)=X_{21}f(b)\not=0$, consequently, $b\not\in\Omega_{f_\epsilon}.$ So $b\not\in \Omega_{f_\epsilon}\cup Cr(f_\epsilon)$.

Now by Lemma \ref{tangency}, it is sufficient to show that $[d_b(Gf_\epsilon)](\xi_{f_\epsilon}(b))\not=0$. Note that
$$Gf_\epsilon=Gf+(\beta+\gamma x_1)X_{11}f,$$
$$\xi_{f_\epsilon}=-(X_{21}f_\epsilon)X_1+(X_{11}f_\epsilon)X_2=-(X_{21}f)X_1+(X_{11}f)X_2=\xi_f$$
Hence
\begin{equation}\label{calculdif}\begin{array}{lll}
$$[d_b(Gf_\epsilon)](\xi_{f_\epsilon}(b))&=&[d_b(Gf+(\beta+\gamma x_1)X_{11}f)](\xi_f(b))$$\\
$$&=&[d_b(Gf)](\xi_f(b))+[d_b\{(\beta+\gamma x_1)X_{11}f\}](\xi_f(b))$$\\
$$&=&[d_b(Gf)](\xi_f(b))+[(\beta+\gamma x_1)d(X_{11}f)$$\\
$$&&+(X_{11}f)d\{(\beta+\gamma x_1)\}]_b(\xi_f(b))$$\\
$$&=&[d_b(Gf)](\xi_f(b))+(\beta+\gamma b_1)[d_b(X_{11}f)](\xi_f(b))$$\\
$$&&+(X_{11}f(b))[d_b(\beta+\gamma x_1)](\xi_f(b)).$$\\
$$&=&[d_b(Gf)](\xi_f(b))+(\beta+\gamma b_1)[d_b(X_{11}f)](\xi_f(b))$$\\
$$&&-\gamma[X_{11}f(b)].[X_{21}f(b)].$$\\
\end{array}\end{equation}
Since $[d_b(Gf)](\xi_f(b))=0$ by our initial setting and $(\beta+\gamma b_1)=0$ by our choice of $\epsilon$, it follows from Lemma \ref{Omega} that $[d_b(Gf_\epsilon)](\xi_{f_\epsilon}(b))=-\gamma[X_{11}f(b)].[X_{21}f(b)]\not=0$ for $\epsilon\not=0$. This ends the proof of the claim.
\end{proof}

So with Claim \ref{cl2}, we have established c1). Now we try to establish c2). To do so, the following claim is sufficient.

\begin{cla}\label{cl3} For each $i=1,\ldots,\lambda(f)$, let $\Gamma_{f_\epsilon}^i$ be the connected component of $\Gamma_{f_\epsilon}$ corresponding to $\Gamma_f^i$. There exists a constant $0<\delta$ such that if $|\epsilon|\leq\delta$, then $\Gamma_{f_\epsilon}^i$ is not contained in any fiber of $f_\epsilon$.
\end{cla}

\begin{proof} Since $\Gamma_f^i$ is not contained in any fiber of $f$, it follows that $\Gamma_f^i$ is almost everywhere transverse to the fibers of $f$. By Lemma \ref{Omega}, the set $\Omega_f\cup Cr(f)$ is finite. Hence, we can choose a point $a_i\in\Gamma_f^i-(\Omega_f\cup Cr(f))$ such that $\Gamma_f^i\pitchfork_{a_i} f^{-1}(f(a_i))$. By Lemma \ref{tangency}, $[d_{a_i}(Gf)](\xi_f(a_i))\not=0$. Let 
$$\displaystyle m_1:=\min_{i=1,\ldots,s_1} |[d_{a_i}(Gf)](\xi_f(a_i))|>0.$$
Let $\zeta>0$ such that $B(a_i,\zeta)\cap\{|[d(Gf)](\xi_f)|\leq\displaystyle\frac{3m_1}{4}\}=\emptyset$ where $B(a_i,\zeta):=\{x\in \R^4: ||x-a_i||\leq\zeta\}.$ Set 
$$\displaystyle m_2:=\max_{i=1,\ldots,s_1;x\in B(a_i,\zeta)} |[d_x(X_{11}f)](\xi_f(x))|<+\infty,$$
$$\displaystyle m_3:=\max_{i=1,\ldots,s_1;x\in B(a_i,\zeta)} |X_{11}f(b)|.|X_{21}f(b)|<+\infty,$$
$$\displaystyle m_4:=\max_{i=1,\ldots,s_1;x\in B(a_i,\zeta)} ||x-b||<+\infty.$$
Note that $m_3,m_4>0$. Let $r>0$ such that 
\begin{displaymath}r\le\left\{\begin{array}{ll}
$$\displaystyle\min\{\frac{m_1}{4m_2m_4},\frac{m_1}{4m_3}\} &\text{ if } m_2\not=0$$\\
$$\displaystyle\frac{m_1}{4m_3} &\text{ if } m_2=0.$$\\
\end{array}\right.\end{displaymath}
and such that for $|\epsilon|<r$, $f_\epsilon\in A_d\cap D_d$ and the intersection $B(a_i,\zeta)\cap\Gamma_{f_\epsilon}^i$ is a curve.
Since the set $\Omega_{f_\epsilon}\cup Cr(f_\epsilon)$ is finite, we can choose a point $\sigma_i=(\sigma_{i1},\sigma_{i2},\sigma_{i3},\sigma_{i4})\in B(a_i,\zeta)\cap\Gamma_{f_\epsilon}^i$ such that $\sigma_i\not\in\Omega_{f_\epsilon}\cup Cr(f_\epsilon)$. Let us prove that $[d_{\sigma_i}(Gf_\epsilon)](\xi_{f_\epsilon}(\sigma_i))\not=0$. In fact, by the same computation as (\ref{calculdif}), we have
\begin{displaymath}\begin{array}{lll}
$$[d_{\sigma_i}(Gf_\epsilon)](\xi_{f_\epsilon}(\sigma_i))&=&[d_{\sigma_i}(Gf)](\xi_f(\sigma_i))+(\beta+\gamma \sigma_{i1})[d_{\sigma_i}(X_{11}f)](\xi_f(\sigma_i))$$\\
$$&&-\gamma[X_{11}f(\sigma_i)].[X_{21}f(\sigma_i)].$$\\
\end{array}\end{displaymath}
So 
\begin{displaymath}\begin{array}{lll}
$$|[d_{\sigma_i}(Gf_\epsilon)](\xi_{f_\epsilon}(\sigma_i))|&\geq&|[d_{\sigma_i}(Gf)](\xi_f(\sigma_i))|-|(\beta+\gamma \sigma_{i1})|.|[d_{\sigma_i}(X_{11}f)](\xi_f(\sigma_i))|$$\\
$$&&-|\gamma|.|X_{11}f(\sigma_i)|.|X_{21}f(\sigma_i)|.$$
\end{array}\end{displaymath}
We note the following facts:

i) Since $\sigma_i\in B(a_i,\zeta)$ and $B(a_i,\zeta)\cap\{|[d{\sigma_i}(Gf)](\xi_f)|\leq\displaystyle\frac{3m_1}{4}\}=\emptyset$, it follows that 
$$|[d_{\sigma_i}(Gf)](\xi_f(\sigma_i))|>\displaystyle\frac{3m_1}{4}.$$

ii) $|(\beta+\gamma \sigma_{i1})|.|[d_{\sigma_i}(X_{11}f)](\xi_f(\sigma_i))|=|(-b_1+\sigma_{i2})\gamma|.|[d_{\sigma_i}(X_{11}f)](\xi_f(\sigma_i))|\leq m_4 r m_2\leq\displaystyle\frac{m_1}{4}.$

iii) $|\gamma|.|X_{11}f(\sigma_i)|.|X_{21}f(\sigma_i)|\leq r m_3\leq\displaystyle\frac{m_1}{4}.$

\noindent These imply that 
$$|[d_{\sigma_i}(Gf_\epsilon)](\xi_{f_\epsilon}(\sigma_i))|\geq\displaystyle\frac{3m_1}{4}-\frac{m_1}{4}-\frac{m_1}{4}=\frac{m_1}{4}>0.$$
Then by Lemma \ref{tangency}, we deduce that $\Gamma_{f_\epsilon}^i\pitchfork_{\sigma_i} f^{-1}(f_\epsilon(\sigma_i)).$ This ends the proof of the claim.

\end{proof}

By contruction, $\lambda(f_\epsilon)\geq\lambda(f)+1$. Moreover, $\lambda(f_{t\epsilon})\geq\lambda(f)+1$ for every $t\in(0,1]$. Hence, the lemma is proved.
\end{proof}

Lemma \ref{redu} shows that, after a small appropriate perturbation $f_\epsilon$, the number of connected components of $\Gamma_{f_\epsilon}$, which are not contained in any fiber of $f_\epsilon$, increases. So we obtain an important corollary as follows.

\begin{coro}\label{density} The intersection $M_d\cap A_d\cap D_d$, where $M_d$ is defined by (\ref{Md}), is dense in $\R_d[x]$.
\end{coro}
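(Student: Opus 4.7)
The plan is to iterate Lemma \ref{redu}, eliminating one by one the connected components of $\Gamma_f$ contained in fibers of $f$, until none remain. Fix $f \in \R_d[x]$ and $\eta > 0$; the goal is to produce $\tilde f \in M_d \cap A_d \cap D_d$ with $\|\tilde f - f\| < \eta$.

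First, since $A_d \cap D_d$ is semi-algebraic open dense in $\R_d[x]$, I can choose $f_0 \in A_d \cap D_d$ with $\|f - f_0\| < \eta/2$. If $\kappa(f_0) = 0$, then every connected component of $\Gamma_{f_0}$ fails to lie in a single fiber of $f_0$, so by Lemma \ref{conhori} combined with Remark \ref{Impot}, no horizontal curve is contained in any $V_{f_0} \cap f_0^{-1}(t)$; hence $f_0 \in M_d \cap A_d \cap D_d$ and we are done.

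Otherwise $\kappa(f_0) \ge 1$. Apply Lemma \ref{redu} to get a direction $\epsilon$ with $\lambda((f_0)_{t\epsilon}) \ge \lambda(f_0) + 1$ for every $t \in (0,1]$. Since $A_d \cap D_d$ is open, I can shrink $t$ to some $t_1 \in (0,1]$ small enough that $f_1 := (f_0)_{t_1\epsilon}$ still lies in $A_d \cap D_d$ and $\|f_1 - f_0\| < \eta/4$. Iterate: while $\kappa(f_k) > 0$, construct $f_{k+1} \in A_d \cap D_d$ with $\lambda(f_{k+1}) \ge \lambda(f_k) + 1$ and $\|f_{k+1} - f_k\| < \eta/2^{k+2}$. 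By Remark \ref{MT}, every $f \in B_d \supseteq A_d$ satisfies $\lambda(f) + \kappa(f) \le c(d)$, so $\lambda(f_k) \le c(d)$ for all $k$; since $\lambda$ strictly increases, the process must terminate at some step $K \le c(d)$ with $\kappa(f_K) = 0$. Then $f_K \in M_d \cap A_d \cap D_d$ and
\[
\|f_K - f\| \le \|f - f_0\| + \sum_{k=0}^{K-1} \|f_{k+1} - f_k\| < \frac{\eta}{2} + \sum_{k=0}^{\infty} \frac{\eta}{2^{k+2}} = \eta,
\]
which gives the desired density.

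The only delicate point is balancing, at each step of the iteration, three constraints simultaneously: the perturbation must be small enough to remain inside the open set $A_d \cap D_d$, small enough to keep the accumulated distance from $f$ under control, and yet large enough (i.e. nonzero) to strictly increment $\lambda$. The one-parameter flexibility $t \in (0,1]$ in Lemma \ref{redu}, together with the openness of $A_d$ and $D_d$ and the uniform component bound $c(d)$, is exactly what makes this balance possible and guarantees termination in finitely many steps.
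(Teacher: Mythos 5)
Your proof is correct and follows exactly the paper's argument: iterate Lemma \ref{redu}, using the uniform bound $\lambda(f)+\kappa(f)\le c(d)$ from Remark \ref{MT} to guarantee termination in at most $c(d)$ steps, and the scaling parameter $t\in(0,1]$ together with the openness of $A_d\cap D_d$ to keep the total perturbation arbitrarily small. The paper states this in one sentence; your version merely makes the bookkeeping (the geometric series of perturbation sizes and the $\kappa=0\Rightarrow f\in M_d$ step via Remark \ref{Impot}) explicit.
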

\begin{proof} Since the number $\lambda(f)$ is bounded from above by $c(d)$, repeated applications of Lemma \ref{redu} give a polynomial $h\in M_d$ which can be chosen arbitrarily close to $f$. The corollary follows.
\end{proof}

The following lemma proves the semi-algebraicity of the set $M_d\cap A_d\cap D_d$.

\begin{lem}\label{semia} The set $M_d\cap A_d\cap D_d$, where $M_d$ is defined by (\ref{Md}), is a semi-algebraic set.
\end{lem}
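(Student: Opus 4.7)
The plan is to reduce the statement to a direct application of the Tarski--Seidenberg theorem, by expressing membership in $M_d \cap A_d \cap D_d$ as a first-order formula over $\R$ whose atomic pieces are polynomial (in)equalities in the coefficients of $f$.

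First I reformulate the condition. For $f \in A_d \cap D_d \subset B_d$, Lemma \ref{conhori} combined with Lemma \ref{tangentvector} (and the non-vanishing of $\xi_f$ off the finite set $\Omega_f$ from Lemma \ref{Omega}) gives that $V_f \cap f^{-1}(t)$ contains a horizontal curve for some $t$ if and only if some connected component of $\Gamma_f$ is contained in a fiber of $f$: indeed, the forward direction uses that any such horizontal curve must lie in $\Gamma_f$, and the reverse direction uses that on such a component the tangent $\xi_f$ is nonzero and lies in $\Delta$, so the component itself is horizontal. It therefore suffices to show that
\[
E := \{f \in A_d \cap D_d : \text{some connected component of } \Gamma_f \text{ lies in a fiber of } f\}
\]
is semi-algebraic, since $M_d \cap A_d \cap D_d = (A_d \cap D_d) \setminus E$.

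Next I express $E$ via a first-order formula. For $f \in A_d \subset B_d$, the algebraic set $\Gamma_f = \{X_1f = X_2f = Gf = 0\}$ is either empty or a smooth algebraic curve, so each connected component $C$ is open in $\Gamma_f$ and is a connected real-analytic manifold. Hence $C$ is contained in a fiber of $f$ if and only if $f|_C$ is constant, and by the identity principle for real-analytic functions this holds if and only if there exist $x \in C$ and $\epsilon > 0$ with $B(x,\epsilon) \cap \Gamma_f \subset f^{-1}(f(x))$. Consequently, $f \in E$ is equivalent to
\[
\exists\, x \in \R^4,\ \exists\, \epsilon > 0 : x \in \Gamma_f \ \wedge\ \forall\, y \in \R^4 \bigl(\|y-x\|^2 < \epsilon^2 \wedge y \in \Gamma_f \Rightarrow f(y) = f(x)\bigr),
\]
a first-order formula whose atomic constituents are polynomial (in)equalities in $x,y,\epsilon$ and the coefficients of $f$ (membership in $\Gamma_f$ being the vanishing of the polynomials $X_1f,\,X_2f,\,Gf$).

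By the Tarski--Seidenberg theorem, the set of coefficient vectors $f$ satisfying this formula is semi-algebraic; combined with the semi-algebraicity of $A_d$ and $D_d$ established earlier, this shows that $E$, and hence $M_d \cap A_d \cap D_d$, is semi-algebraic. The main delicate point in this plan is the equivalence in the preceding paragraph: local constancy of the polynomial $f$ on $\Gamma_f$ near a single point must propagate to the entire connected component of $\Gamma_f$. This rests on the smoothness of $\Gamma_f$ on $A_d \subset B_d$ and the real-analytic identity principle; it would fail at singular points of $\Gamma_f$, which is precisely why the restriction to $f \in A_d$ is needed.
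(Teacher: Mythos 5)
Your proof is correct, but it takes a genuinely different route from the paper's. The paper expresses $M_d\cap A_d\cap D_d$ as the set of $f\in A_d\cap D_d$ such that on each connected component $\Gamma_f^i$ of $\Gamma_f$ there exists a point $\sigma_i\notin\Omega_f\cup Cr(f)$ with $[d_{\sigma_i}(Gf)](\xi_f(\sigma_i))\neq 0$ (transversality of $\Gamma_f^i$ to the fiber through $\sigma_i$, via Lemma \ref{tangency}), and then invokes Tarski--Seidenberg \emph{together with} Hardt's theorem \ref{Hardt} --- the latter being needed precisely because ``for every connected component'' is not by itself a first-order condition on the coefficients of $f$. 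You avoid quantifying over components altogether: using that $\Gamma_f$ is a smooth (hence real-analytic and locally connected) curve for $f\in A_d\subset B_d$, you translate ``some component of $\Gamma_f$ lies in a fiber'' into the first-order statement ``$f$ is constant on $B(x,\epsilon)\cap\Gamma_f$ for some $x\in\Gamma_f$ and some $\epsilon>0$'', the equivalence resting on the openness of components in the locally connected set $\Gamma_f$ and on the identity principle for the analytic function $f|_{\Gamma_f}$. This makes the application of Tarski--Seidenberg immediate and dispenses with Hardt's theorem and with the tangency machinery ($\xi_f$, $Gf$, $\Omega_f$). One step worth spelling out slightly more is the forward direction of your preliminary reduction: Lemma \ref{conhori} places a horizontal curve of $V_f\cap f^{-1}(t)$ inside $\Gamma_f$, but to conclude that an \emph{entire} component of $\Gamma_f$ lies in $f^{-1}(t)$ you need the same identity-principle argument again (a non-constant curve in the $1$-manifold $\Gamma_f$ has nonempty interior there); this is exactly Remark \ref{Impot} of the paper. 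With that noted, both arguments are sound, and yours is the more self-contained of the two.
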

\begin{proof} For $f\in A_d\cap D_d$, denote $\Gamma_f^1,\ldots,\Gamma_f^{k_f}$ be the connected components of $\Gamma_f$ which are semi-algebraic sets. Then
$$\begin{array}{lll}
$$M_d\cap A_d\cap D_d=\{f\in A_d\cap D_d:&\exists \sigma_i\in\Gamma_f^i-(\Omega_f\cup Cr(f)):$$\\
$$&[d_{\sigma_i}(Gf)](\xi_f(\sigma_i))\not=0,i=1,\ldots,k_f\}.$$
\end{array}$$
This set is clearly a semi-algebraic set by Tarski-Seidenberg principle \cite{BR,BCR,Cos} and Theorem \ref{Hardt}.
\end{proof}

Now we have all necessary ingredients to prove the theorem \ref{noncontient}. By Corollary \ref{density} and lemma \ref{semia}, it follows that $M_d\cap A_d\cap D_d$ is semi-algebraic and dense in $\R_d[x]$. Then by Theorem \ref{DSA}, $M_d\cap A_d\cap D_d$ contains a semi-algebraic open dense set in $\R_d[x]$. Consequently, $M_d$ contains a semi-algebraic open dense set in $\R_d[x]$. This completes the proof of the theorem \ref{noncontient}.


\section{Trajectories of horizontal gradient}
In this section, we prove that for a generic polynomial $f$, every trajectory of $\nabla^hf$ in a given compact set $B$ always has a limit. Precisely, we will prove the following.

\begin{theo}\label{havelimit} Generically, the trajectories of $\nabla^hf$ in $B$ have a limit, i.e., the set
\begin{equation}\label{Nd}N_d=\{f\in\R_d[x]: \text{ the trajectories of }\nabla^hf \text{ have a limit on } \partial B\cup V_f\}\end{equation}
contains a semi-algebraic open dense set in $\R_d[x]$.
\end{theo}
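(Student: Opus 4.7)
The plan is to take $N_d'$ to be the intersection of the semi-algebraic open dense sets from Theorem \ref{dimension}, Lemma \ref{MorseC}, and Theorem \ref{noncontient}, and to show $N_d'\subset N_d$ up to a further semi-algebraic open dense restriction. Fix $f\in N_d'$ and a maximal trajectory $x\colon[0,T)\to B$ of $\nabla^h f$. Since $\nabla^h f$ is polynomial and $B$ is compact, the speed $\|\dot x\|=\|\nabla^h f(x)\|$ is uniformly bounded and $x$ is Lipschitz; if $T<\infty$ the trajectory extends continuously to $T$ with $x(T)\in\partial B$, which handles that case. Otherwise $T=\infty$, the identity $\frac{d}{dt}f(x(t))=\|\nabla^h f(x(t))\|^2\ge 0$ forces $f\circ x$ to be non-decreasing and bounded, so $f(x(t))\to t_0$ for some $t_0$, and $\int_0^\infty\|\nabla^h f(x(t))\|^2\,dt<\infty$.

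Let $\Lambda$ denote the $\omega$-limit set, a nonempty compact connected subset of $\bar B\cap f^{-1}(t_0)$. The usual gradient-like-flow argument---if $\|\nabla^h f\|\ge\delta>0$ on a neighborhood of some $p\in\Lambda$, then infinitely many visits of $x$ to that neighborhood would force $f$ to grow without bound---yields $\Lambda\subset V_f$. Thus $\Lambda\subset V_f\cap f^{-1}(t_0)$, and Lemma \ref{Constrain} gives $\dim\Lambda\le 1$. If $\Lambda$ is a single point the trajectory converges to it, so suppose for contradiction that $\Lambda$ contains at least two points.

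Reparametrize $x$ by arc length to get a $1$-Lipschitz horizontal curve $y\colon[0,L)\to\bar B$; finite $L$ already forces convergence, so $L=\infty$. Choose distinct smooth points $p\ne q$ of $V_f\cap f^{-1}(t_0)$ in $\Lambda$ (the singular set of $V_f\cap f^{-1}(t_0)$ is generically finite, by a transversality argument modeled on Lemma \ref{courbelisse}) and sequences $s_n<u_n\to\infty$ with $y(s_n)\to p$ and $y(u_n)\to q$, chosen as successive near-visits. After passing to a subsequence with $u_n-s_n\to D$, Ascoli-Arzel\`a applied to $y|_{[s_n,\,s_n+D]}$ produces a $1$-Lipschitz horizontal limit $z\colon[0,D]\to\bar B$ with $z(0)=p$ and $z(D)=q$; each $z(\tau)$, being the limit of $y(s_{n_k}+\tau)$ with $s_{n_k}+\tau\to\infty$, lies in $\Lambda\subset V_f\cap f^{-1}(t_0)$. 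Thus $z$ is a non-constant horizontal curve contained in a single fiber of $f|_{V_f}$, contradicting Theorem \ref{noncontient}. The main obstacle is exactly the boundedness of $u_n-s_n$: if trajectories spiral around $\Lambda$ with periods tending to infinity, Ascoli-Arzel\`a fails on whole excursions and $z$ must be rebuilt from short subarcs of many returns, via a delicate local analysis near a smooth branch of $V_f\cap f^{-1}(t_0)$ combined with the semi-algebraic triviality from Theorem \ref{Hardt}. Once this is established, semi-algebraicity of $N_d$ follows from Tarski-Seidenberg applied to the resulting definable condition on trajectory endpoints, and Theorem \ref{DSA} upgrades density to a semi-algebraic open dense subset of $\R_d[x]$.
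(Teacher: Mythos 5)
Your overall framework (monotonicity of $f$ along trajectories, the $\omega$-limit set $\Lambda\subset V_f\cap f^{-1}(t_0)$, and the aim of contradicting Theorem \ref{noncontient}) is reasonable, but the step you yourself flag as ``the main obstacle'' is a genuine gap, and it is precisely the point where the classical Riemannian gradient argument breaks down for $\nabla^h f$. Your Ascoli--Arzel\`a construction of a non-constant horizontal curve $z\subset\Lambda$ requires a subsequence along which $u_n-s_n$ stays bounded, i.e., excursions of uniformly bounded arc length from near $p$ to near $q$. Nothing rules out $u_n-s_n\to\infty$: since the \L ojasiewicz gradient inequality fails for $\nabla^h f$, a trajectory may have infinite $g_{SR}$-length while accumulating on $V_f$ (this pathology is exactly what the paper, following \cite{DKO}, is built to handle), and in that case every limit obtained on a fixed window $[s_n,s_n+D]$ may be the constant curve at $p$, producing no contradiction with Theorem \ref{noncontient}. ``Rebuilding $z$ from short subarcs of many returns'' is a wish, not an argument. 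Even in the bounded case you would still need to justify that a uniform limit of horizontal curves is horizontal (true here via weak-$*$ compactness of the derivatives and convexity of each $\Delta_x$, but not automatic and not addressed).

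The paper closes this gap by a different mechanism that never constructs a limit curve. It introduces the degenerate metric $\delta_{SR}=d_{V_f}^2\,g_{SR}$, proves (using $f\in L_d$, i.e., submersivity of $\nabla^hf$ on $V_f$, to get the \L ojasiewicz inequality with exponent $1$ in Lemma \ref{expo}) that every trajectory has finite $\delta$-length bounded by $|f(\alpha(t_2))-f(\alpha(t_1))|/C_1$ (Lemma \ref{finitelength}), and then shows (Lemma \ref{localfini}) that any horizontal curve crossing the region $B_\Phi(a,R)-\mathring{B}_\Phi(a,\frac{R}{2})$ around an accumulation point $a\in V_f-\Gamma_f$ has $\delta$-length bounded below by a positive constant; two accumulation points force infinitely many such crossings and hence infinite $\delta$-length, a contradiction. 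Theorem \ref{noncontient} enters only to guarantee that $\Gamma_f\cap f^{-1}(t_0)$ is finite, so that one can always locate an accumulation point off $\Gamma_f$, where the box construction (which needs $T_aV_f\pitchfork\Delta_a$, Lemma \ref{diffeolocal}) is available. Note also that your set $N_d'$ omits $L_d$, which is indispensable for this length argument, and that your closing appeal to Tarski--Seidenberg for ``the condition on trajectory endpoints'' does not work as stated, since trajectories of a polynomial vector field are not semi-algebraic objects; this last point is harmless, because as in the paper one only needs $N_d$ to contain the semi-algebraic dense set $M_d\cap A_d\cap D_d\cap L_d$ and then to invoke Theorem \ref{DSA}.
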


Let $d_{V_f}(.)$ be the distance function to $V_f$ with respect to the usual Euclidean metric on $\R^4$. By \L ojasiewicz inequality (\cite{Loj2}), there exist some constants $C_1,C_2>0,\gamma\geq1\geq\beta>0$ such that for all $x\in B$, we have
$$C_1d_{V_f}^\gamma(x)\leq||\nabla^h f(x)||\leq C_2d_{V_f}^\beta(x).$$

We have the following lemma.
\begin{lem}\label{expo} For $f\in L_d$, where $L_d$ is defined in Proposition \ref{submersion}, we can choose the exponents $\gamma=\beta=1$ in the \L ojasiewicz inequality (\ref{Lojas}), i.e., there exist some constants $0<C_1\leq C_2$ such that for all $x\in B$, we have
\begin{equation}\label{Lojas}
C_1d_{V_f}(x)\leq||\nabla^h f(x)||\leq C_2d_{V_f}(x).
\end{equation}
\end{lem}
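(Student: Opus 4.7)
The plan is to establish the two inequalities separately. Both rest on the assumption $f\in L_d$, which by Proposition \ref{submersion} guarantees that the derivative $d_x(\nabla^hf):\R^4\to\R^2$ has full rank $2$ at every point $x\in V_f$, together with the compactness of $B$ and the smoothness of the polynomial map $\nabla^hf$.

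For the upper bound, $\nabla^hf$ is $C^\infty$, hence Lipschitz on the compact set $B$ with some constant $L>0$. Since $\nabla^hf$ vanishes on $V_f$, projecting $x$ to a nearest point $p(x)\in V_f$ gives
$$\|\nabla^hf(x)\|=\|\nabla^hf(x)-\nabla^hf(p(x))\|\le L\,\|x-p(x)\|=L\,d_{V_f}(x),$$
so one may take $C_2=L$ and $\beta=1$. For the lower bound I would argue locally and then patch by compactness. Fix any $x_0\in V_f\cap B$. Because $\nabla^hf$ is submersive at $x_0$, the kernel of $d_{x_0}(\nabla^hf)$ is exactly $T_{x_0}V_f$, and on any linear complement $N_{x_0}$ the restriction of $d_{x_0}(\nabla^hf)$ is an isomorphism onto $\R^2$. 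By the submersion theorem there exist straightening coordinates $(y_1,y_2,y_3,y_4)$ on a neighbourhood $U_{x_0}$ of $x_0$ in which $V_f=\{y_1=y_2=0\}$ and $\nabla^hf(y)=(y_1,y_2)$. Since the coordinate change is a smooth diffeomorphism, it is bi-Lipschitz on a small closed ball, so on that ball both $\|\nabla^hf(x)\|$ and $d_{V_f}(x)$ are comparable to $\sqrt{y_1^2+y_2^2}$. This yields a local constant $c_{x_0}>0$ with
$$\|\nabla^hf(x)\|\ge c_{x_0}\,d_{V_f}(x)\qquad\text{for all }x\in U_{x_0}.$$

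By compactness of $V_f\cap B$, finitely many such neighbourhoods $U_{x_0^1},\ldots,U_{x_0^N}$ cover $V_f\cap B$, giving a single constant $c>0$ valid on a full open neighbourhood $U$ of $V_f\cap B$. On the complement $B\setminus U$, the continuous function $\|\nabla^hf\|$ does not vanish on a compact set, hence is bounded below by a positive constant $c'$, while $d_{V_f}$ is bounded above by the diameter $D$ of $B$; choosing $C_1:=\min(c,c'/D)$ yields the lower bound throughout $B$. The only non-routine step is the localization at points of $V_f$: one has to pass from the algebraic statement that $d_{x_0}(\nabla^hf)|_{N_{x_0}}$ is invertible to a metric comparison between $\|\nabla^hf(x)\|$ and $d_{V_f}(x)$, and the straightening/bi-Lipschitz argument above is the cleanest way to do this uniformly in a neighbourhood. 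Once that is done, the rest is a standard compactness-and-patching argument, and the cases $V_f=\emptyset$ or $V_f\cap B=\emptyset$ are immediately handled since then $\nabla^hf$ is bounded away from zero on $B$ and the inequalities hold vacuously or with $d_{V_f}\equiv+\infty$ on the left.
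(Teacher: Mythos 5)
Your proof is correct and follows essentially the same route as the paper: straightening $\nabla^hf$ near each point of $V_f\cap B$ via the submersion/constant rank theorem, using that the chart is bi-Lipschitz to compare $\|\nabla^hf\|$ with $d_{V_f}$, and patching by compactness of $V_f\cap B$ together with a positive lower bound for $\|\nabla^hf\|/d_{V_f}$ on the complement. The only (harmless) variation is that you obtain the upper bound globally from the Lipschitz constant of $\nabla^hf$ on $B$, whereas the paper extracts both bounds from the same local normal form.
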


\begin{proof} The idea of proof is similar to Lemma 5.15 in \cite{DKO}. By Proposition \ref{submersion}, for $f\in L_d$, $\nabla^h f$ is a submersion on $V_f$. This implies that for each $x\in V_f$, the Jacobian matrix of $\nabla^hf:\R^4\rightarrow \R^2$ is of constant rank $2$. By constant rank theorem, for all $x\in V_f,$ there exist a diffeomorphism $u:U_1^x\rightarrow U^x$ from a neighborhood $U_1^x$ of $0\in\R^4$ on a neighborhood $U^x$ of $x$, $u(0)=x$, and a diffeomorphism $w:W\rightarrow W_1$ from a neighborhood $W$ of $0\in\R^2$ on a neighborhood $W_1$ of $0\in\R^2$, $w(0)=0$ such that
$$A(y_1,y_2,y_3,y_4)=(y_1,y_2)$$
where $A=w\circ\nabla^hf\circ u$. It is clear that $u^{-1}(V_f\cap U^x)=\{y_1=y_2=0\}\cap U_1^x$ and that
\begin{equation}\label{2*}||A(y)||=||(y_1,y_2)||=dist(y,V_1f)\end{equation}
where $dist(.,V_1f)$ is the distance to $V_1f$ with respect to the standard metric on $\R^4$. Since $u$ and $w$ are diffeomorphism (defined on the convex sets), they are bi-Lipschitz, i.e., $u,v$ and $u^{-1},v^{-1}$ are Lipschitz. Hence, by (\ref{2*}), there exist some constants $0<C_1^x\leq C_2^x<+\infty$ such that for all $z\in U^x$, we have
\begin{equation}\label{3*}C_1^xd_{V_f}(z)\leq||\nabla^hf(z)||\leq C_2^xd_{V_f}(z).\end{equation}
By compactness of $B\cap V_f$, we can find a finite cover of $B\cap V_f$ by some open sets $U^{x_i}$, $i=1,\ldots,k$, such that (\ref{3*}) holds in each $U^{x_i}$. Note that there exists $r>0$ such that for all $z\in\tilde{B}=B-\displaystyle\bigcup_i U^{x_i}$, we have $d_{V_f}(z)\geq r,$ so let
$$\displaystyle\tilde{C}_1=\min_{\tilde{B}}\frac{||\nabla^hf||}{d_{V_f}},\tilde{C}_2=\max_{\tilde{B}}\frac{||\nabla^hf||}{d_{V_f}}.$$ 
Then
$$\tilde{C}_1d_{V_f}(z)\leq||\nabla^hf(z)||\leq\tilde{C}_2d_{V_f}(z)$$
for all $z\in\tilde{B}$. Finally, let
$$C_1=\min\{C_1^{x_1},\ldots,C_1^{x_k},\tilde{C}_1\},C_2=\max\{C_2^{x_1},\ldots,C_2^{x_k},\tilde{C}_2\}.$$
Then for all $z\in B$, we have
$$C_1d_{V_f}(z)\leq||\nabla^h f(z)||\leq C_2d_{V_f}(z).$$
The lemma is proved.
\end{proof}

Recall that $g_{SR}=dx_1^2+dx_2^2$ is a sub-Riemannian metric on $\Delta$. Let $\delta_{SR}=d_{V_f}^2g_{SR}$. Note that the metrics $\delta_{SR}$ depends on the polynomial $f$. Moreover, it is degenerate on $V_f$.

Let $<<.,.>>$ and $||.||$ denote respectively the scalar product and the norm for the metric $g_{SR}$, let $<.,.>$ and $|.|$ denote respectively the scalar product and the norm for the metric $\delta_{SR}$. For a horizontal curve $\alpha:(t_1,t_2)\to\R^4$, its length with respect to $g_{SR}$ and $\delta_{SR}$, denoted respectively by $l_g(\alpha(t))$ and $l_\delta(\alpha(t))$ are given by the following formulas.

\begin{equation}\label{lg}
\displaystyle l_g(\alpha(t))=\int_0^T||\dot\alpha(t)||_{\alpha(t)}dt,\end{equation}
\begin{equation}\label{ldelta}\begin{array}{rcl}
$$l_\delta(\alpha(t))&=&\dis\int_0^T|\dot\alpha(t)|_{\alpha(t)}dt$$\\
$$&=&\dis\int_0^T d_{V_f}(\alpha(t))||\dot\alpha(t)||_{\alpha(t)}dt.$$\end{array}\end{equation}


The horizontal gradient for the metric $\delta_{SR}$ can be computed from the one for the metric $g_{SR}$, this is given by the following lemma.

\begin{lem}\label{metriccompare} Let $^\delta\nabla f=(u_1,u_2,u_3,u_4)$ and $^\delta\nabla^h f=(u_1,u_2)$, be respectively the gradient and the  horizontal gradient of $f$ with respect to the metric $\delta:=\delta_{SR}+\omega_3^2+\omega_4^2$ where $\omega_3=dx_3-x_1dx_2,$ $\omega_4=dx_4-x_3dx_2$. Then, in the coordinate system $\{X_1,X_2,X_3,X_4\}$, we have

\begin{equation}
\left \{
\begin{array}{ll}
$$\displaystyle u_i=\frac{X_i f}{d_{V_f}^2}, (i=1,2)$$ \\
$$u_i=X_i f, (i=3,4)$$
\end{array}\right .
\end{equation}
Which mean
$\displaystyle ^\delta\nabla^h f=\frac{\nabla^h f}{d_{V_f}^2}$ and $\displaystyle |^\delta \nabla^h f|=\frac{||\nabla^h f||}{d_{V_f}}.$
\end{lem}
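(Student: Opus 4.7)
The plan is to reduce the statement to a direct computation in the frame $\{X_1,X_2,X_3,X_4\}$, exploiting the fact that $\{dx_1,dx_2,\omega_3,\omega_4\}$ is the dual coframe. First I would verify this duality. Since $X_1=\partial/\partial x_1$ and $X_2=\partial/\partial x_2+x_1\partial/\partial x_3+x_3\partial/\partial x_4$, one checks $dx_i(X_j)=\delta_{ij}$ for $i,j\in\{1,2\}$, and $\omega_3(X_2)=dx_3(X_2)-x_1dx_2(X_2)=x_1-x_1=0$, $\omega_4(X_2)=x_3-x_3=0$, $\omega_3(X_1)=\omega_4(X_1)=0$. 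For the vertical vectors $X_3,X_4$, the contact forms reduce to $dx_3,dx_4$, so $\omega_k(X_l)=\delta_{kl}$ for $k,l\in\{3,4\}$.

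Given this duality, the metric $\delta=d_{V_f}^2(dx_1^2+dx_2^2)+\omega_3^2+\omega_4^2$ is diagonal in the frame $\{X_i\}$, with $\delta(X_1,X_1)=\delta(X_2,X_2)=d_{V_f}^2$ and $\delta(X_3,X_3)=\delta(X_4,X_4)=1$. Writing $^\delta\nabla f=\sum_{i=1}^4 u_iX_i$ and applying the defining identity $\delta({}^\delta\nabla f,X_i)=df(X_i)=X_if$ for each $i$ directly yields $u_id_{V_f}^2=X_if$ for $i=1,2$ and $u_i=X_if$ for $i=3,4$, which is the first assertion. The norm identity follows from
$$|{}^\delta\nabla^h f|^2=u_1^2\delta(X_1,X_1)+u_2^2\delta(X_2,X_2)=\frac{(X_1f)^2+(X_2f)^2}{d_{V_f}^2}=\frac{\|\nabla^h f\|^2}{d_{V_f}^2}.$$

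There is no genuine obstacle: the lemma is essentially a bookkeeping statement explaining how multiplying the horizontal block of the metric by the conformal factor $d_{V_f}^2$ divides the horizontal component of the gradient by the same factor. The only point that requires care is confirming that the horizontal and vertical blocks of $\delta$ remain decoupled in the frame $\{X_i\}$; this is immediate once one notes that the dual coframe $\{dx_1,dx_2,\omega_3,\omega_4\}$ already splits into a horizontal pair and a vertical pair, so rescaling only the horizontal pair preserves the diagonality.
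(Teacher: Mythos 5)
Your proof is correct and is the natural direct computation: checking that $\{dx_1,dx_2,\omega_3,\omega_4\}$ is the coframe dual to $\{X_1,X_2,X_3,X_4\}$, so that $\delta$ is diagonal in this frame with horizontal entries $d_{V_f}^2$ and vertical entries $1$, and then reading off the gradient components from $\delta({}^\delta\nabla f,X_i)=X_if$. The paper itself gives no argument here (it only cites the analogous Lemma 5.8 of \cite{DKO}), and that cited proof proceeds by essentially the same frame computation, so your write-up simply supplies the details the paper omits.
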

\begin{proof} The proof is similar to the proof of Lemma 5.8 in \cite{DKO}.
\end{proof}

The following lemma is similar to Theorem 5.10 in \cite{DKO}.
\begin{lem}\label{finitelength} Let $\alpha:(t_1,t_2)\to B$ is a trajectory of $\nabla^hf$. Then for the metric $\delta$, the length of $\alpha$ is bounded by $\displaystyle \frac{|f(\alpha(t_2))-f(\alpha(t_1))|}{C_1}$ where $C_1$ is the constant (depended on $B$) in the \L ojasiewicz inequality (\ref{Lojas}).
\end{lem}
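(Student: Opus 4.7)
The plan is to combine three ingredients already in hand: the length formula (\ref{ldelta}) for the metric $\delta$, the ODE $\dot\alpha = \nabla^h f(\alpha)$ coming from the fact that $\alpha$ is a trajectory of the horizontal gradient, and the left-hand side of the \L{}ojasiewicz inequality (\ref{Lojas}). The key observation is that the conformal factor $d_{V_f}^2$ built into $\delta_{SR}$ exactly compensates for the factor of $d_{V_f}$ lost in the bound $\|\nabla^h f\| \ge C_1 d_{V_f}$, producing an integrand that coincides with the instantaneous rate of change of $f$ along the flow.

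First, I would compute the variation of $f$ along $\alpha$. Since $\dot\alpha(t) = \nabla^h f(\alpha(t))$, the chain rule together with the definition of $\nabla^h f$ via $g_{SR}$ gives
$$\frac{d}{dt} f(\alpha(t)) = \langle\langle \nabla^h f(\alpha(t)), \dot\alpha(t) \rangle\rangle = \|\nabla^h f(\alpha(t))\|^2 \ge 0,$$
so that after integration
$$|f(\alpha(t_2)) - f(\alpha(t_1))| = \int_{t_1}^{t_2} \|\nabla^h f(\alpha(t))\|^2 \, dt,$$
with no absolute value actually needed on the right-hand side.

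Next, substituting $\dot\alpha = \nabla^h f$ into the $\delta$-length formula (\ref{ldelta}) yields
$$l_\delta(\alpha) = \int_{t_1}^{t_2} d_{V_f}(\alpha(t)) \, \|\nabla^h f(\alpha(t))\| \, dt.$$
Applying $d_{V_f}(x) \le C_1^{-1}\|\nabla^h f(x)\|$ from (\ref{Lojas}) pointwise inside the integral gives
$$l_\delta(\alpha) \le \frac{1}{C_1}\int_{t_1}^{t_2} \|\nabla^h f(\alpha(t))\|^2\, dt = \frac{|f(\alpha(t_2)) - f(\alpha(t_1))|}{C_1},$$
which is the claimed estimate.

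There is essentially no technical obstacle: this is a two-line computation once (\ref{ldelta}) and (\ref{Lojas}) are available. The only point to keep in mind is that (\ref{Lojas}) with linear exponents $\beta=\gamma=1$ is not automatic; it requires the generic hypothesis $f\in L_d$ from Lemma \ref{expo}. This should be understood as an implicit standing assumption in the statement of the lemma, since otherwise only a Łojasiewicz inequality with $\gamma>1$ is available and a weaker (but similar in spirit) estimate would have to be stated.
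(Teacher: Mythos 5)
Your proof is correct and is precisely the argument the paper intends: the paper itself only says the proof is ``obtained by the same way as the proof of Theorem 5.10 in \cite{DKO}'', and that argument is exactly your computation $\frac{d}{dt}f(\alpha(t))=\|\nabla^h f(\alpha(t))\|^2$ combined with the pointwise bound $d_{V_f}\le C_1^{-1}\|\nabla^h f\|$ inside the length integral (\ref{ldelta}). Your closing remark that the linear exponent in (\ref{Lojas}), hence the standing hypothesis $f\in L_d$ from Lemma \ref{expo}, is genuinely needed is also accurate, since with a general exponent $\gamma>1$ the integrand $\|\nabla^h f\|^{1+1/\gamma}$ would no longer match $\frac{d}{dt}f(\alpha(t))$.
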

\begin{proof} The proof is obtained by the same way as the proof of Theorem 5.10 in \cite{DKO}.
\end{proof}

The following lemma is similar to Proposition 5.1 in \cite{DKO}.
\begin{lem}\label{growth} If $x(t)$ is not a constant trajectory of $\nabla^hf$, then $\dis\frac{d}{dt}(f(x(t)))>0$ which means that $f$ is non constant on $x(t)$. So the there are no closed trajectories of $\nabla^hf$.
\end{lem}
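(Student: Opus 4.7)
The proof plan is straightforward: it rests on the identity $\frac{d}{dt}f(x(t))=\|\nabla^h f(x(t))\|^2$, together with uniqueness of solutions to the defining ODE.

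First, I would compute directly. Since $x(t)$ is a trajectory of $\nabla^h f$, it satisfies $\dot x(t)=\nabla^h f(x(t))=(X_1 f)(x(t))\,X_1+(X_2 f)(x(t))\,X_2$. Applying $df$ (and remembering that $df(V)=V(f)$ for any vector field $V$, with $\{X_1,X_2,X_3,X_4\}$ orthonormal for $g$), I get
\begin{equation*}
\frac{d}{dt}f(x(t)) \;=\; df_{x(t)}(\dot x(t)) \;=\; (\nabla^h f)(f)\bigl(x(t)\bigr) \;=\; (X_1 f)^2(x(t))+(X_2 f)^2(x(t)) \;=\; \|\nabla^h f(x(t))\|^2.
\end{equation*}
This is $\geq 0$ everywhere, and equals $0$ precisely when $x(t)\in V_f$.

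Next, I would invoke uniqueness. Since $f$ is polynomial, $\nabla^h f$ is (locally) Lipschitz on $\R^4$, so the Cauchy problem $\dot y=\nabla^h f(y)$ has unique solutions. If some $x(t_0)$ were in $V_f$, then the constant curve $y(t)\equiv x(t_0)$ would also solve the equation with the same initial condition, forcing $x(t)\equiv x(t_0)$. Hence if the trajectory is non-constant, $x(t)\notin V_f$ for every $t$, giving the strict inequality
\begin{equation*}
\frac{d}{dt}f(x(t))\;=\;\|\nabla^h f(x(t))\|^2\;>\;0.
\end{equation*}

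Finally, for the closed-orbit statement, suppose for contradiction that $x(t)$ were a closed non-constant trajectory, i.e.\ $x(T)=x(0)$ for some $T>0$. Then $f(x(T))=f(x(0))$. On the other hand, strict monotonicity from the previous step yields $f(x(T))-f(x(0))=\int_0^T \|\nabla^h f(x(t))\|^2\,dt>0$, a contradiction. The only potential obstacle is the appeal to uniqueness at equilibrium points, but polynomial (hence locally Lipschitz) vector fields handle this immediately, so I expect the proof to be essentially a two-line calculation.
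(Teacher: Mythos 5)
Your proof is correct, and it is essentially the argument the paper delegates to Proposition 5.1 of \cite{DKO}: the identity $\frac{d}{dt}f(x(t))=\|\nabla^h f(x(t))\|^2\ge 0$ plus uniqueness of solutions of the locally Lipschitz (indeed polynomial, in the standard coordinates) ODE $\dot x=\nabla^h f(x)$ to rule out the trajectory ever meeting $V_f$, and then integration to exclude closed orbits. Nothing is missing; you have simply written out the details that the paper leaves to the reference.
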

\begin{proof} The proof is analogue to the proof of Proposition 5.1 in \cite{DKO}.
\end{proof}

Now for $a\in V_f-\Gamma_f$, let $U$ be an open neighborhood of $a$ such that $U\cap \Gamma_f=\emptyset$, let $Y,Z$ be some open neighborhoods of $0\in\R^2$, let 
\begin{equation}\begin{array}{rcll}
\varphi:&Y&\to& V_f\cap U\\
&(y_1,y_2)&\mapsto& \varphi(y_1,y_2)
\end{array}\end{equation}
be a smooth local parameterization of $V_f$ at $a$ such that $\varphi(0)=a$. Set 
\begin{equation}\begin{array}{rcll}
\Phi:&Y\times \R^2&\to& U\\
&y=(y_1,y_2,y_3,y_4)&\mapsto& \varphi(y_1,y_2)+y_3X_1(\varphi(y_1,y_2))+y_4X_2(\varphi(y_1,y_2))
\end{array}\end{equation}

We have the following lemma.

\begin{lem}\label{diffeolocal} The map $\Phi$ is a local diffeomorphism at $0\in\R^4$.
\end{lem}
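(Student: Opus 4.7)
The plan is to verify the hypotheses of the inverse function theorem for $\Phi$ at the origin. Since $\Phi$ is smooth (as a composition/sum of smooth maps) and $\dim(Y\times\R^2)=\dim U=4$, it suffices to show that the differential $d_0\Phi:\R^4\to\R^4$ is a linear isomorphism, or equivalently that its four column vectors are linearly independent in $T_a\R^4$.

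First I would compute the partial derivatives of $\Phi$ at $0$. For the last two coordinates we get directly
$$\frac{\partial\Phi}{\partial y_3}(0)=X_1(\varphi(0))=X_1(a),\qquad \frac{\partial\Phi}{\partial y_4}(0)=X_2(\varphi(0))=X_2(a).$$
For the first two coordinates, differentiating the defining formula for $\Phi$ and using $y_3=y_4=0$ at the origin kills the contributions coming from the derivatives of $X_1\circ\varphi$ and $X_2\circ\varphi$, so
$$\frac{\partial\Phi}{\partial y_i}(0)=\frac{\partial\varphi}{\partial y_i}(0),\qquad i=1,2.$$
Thus the columns of the Jacobian matrix of $\Phi$ at $0$ are
$$\frac{\partial\varphi}{\partial y_1}(0),\ \frac{\partial\varphi}{\partial y_2}(0),\ X_1(a),\ X_2(a).$$

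Next I would interpret these four vectors. Because $\varphi$ is a local parameterization of $V_f$ at $a$, the first two vectors form a basis of the tangent plane $T_aV_f$. The last two vectors span $\Delta_a$ (they are an orthonormal frame for $\Delta$ by the construction of $g_{SR}$, hence are linearly independent). So linear independence of the four columns amounts to
$$T_aV_f\cap\Delta_a=\{0\}.$$
This is exactly where the assumption $a\in V_f-\Gamma_f$ enters: by the very definition \eqref{gamma} of $\Gamma_f$, a point of $V_f$ lies outside $\Gamma_f$ iff $T_xV_f\cap\Delta_x=\{0\}$.

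Since both $T_aV_f$ and $\Delta_a$ are $2$-dimensional subspaces of the $4$-dimensional space $T_a\R^4$ meeting only at $0$, they are in direct sum and together span $T_a\R^4$. Hence $d_0\Phi$ is an isomorphism, and the inverse function theorem gives a neighborhood of $0$ on which $\Phi$ is a diffeomorphism onto its image. There is no real obstacle here; the only point that needs care is the vanishing of the mixed second-order contributions when differentiating in $y_1,y_2$, which uses precisely that we evaluate at $y_3=y_4=0$.
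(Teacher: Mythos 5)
Your proof is correct and follows essentially the same route as the paper: both identify the image of $d_0\Phi$ on the two coordinate $2$-planes with $T_aV_f$ and $\Delta_a$ respectively, and use $a\notin\Gamma_f$ (i.e.\ $T_aV_f\cap\Delta_a=\{0\}$) to conclude that $d_0\Phi$ is bijective. Your explicit remark that evaluating at $y_3=y_4=0$ kills the derivatives of $X_i\circ\varphi$ is a welcome detail the paper leaves implicit, but it is the same argument.
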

\begin{proof} Note that $(d\Phi)_0(\R^2\times\{0\})=\{(d\varphi)_0(\R^2)\}\times\{0\}=T_aV_f\times \{0\}$ and $(d\Phi)_0(\{0\}\times\R^2)=\{0\}\times\Delta_a$. On the other side, $T_aV_f\pitchfork\Delta_a$ since $a\not\in\Gamma_f$. Hence $(d\Phi)_0$ is surjective which implies that it is also bijective. The lemma follows.
\end{proof}

\begin{re}\label{equiv}{\rm Let $dist$ and $dist_E$ be respectively the Riemannian distance with respect to the metric $g=dx_1^2+dx_2^2+\omega_3^2+\omega_4^2$ and the usual Euclidean metric $g_E$ on $\R^4$. From the property that all scalar products on a finite  dimensional vector space are equivalent, it follows that $dist$ and $dist_E$ are equivalent on any compact subset of $\R^4$.}
\end{re}

Let $0\in W\subset Y\times Z$ such that $W$ is compact and that $\Phi:W\to\Phi(W)$ is a diffeomorphism. Set 
$$\tilde V_f:=\Phi^{-1}(V_f)\subset\R^2\times\{0\}.$$
Then it is easy to check that
\begin{displaymath}\begin{array}{ccl}
$$d_{V_f}(.)|_{\Phi(W)}&=&dist_E(.,V_f)|_{\Phi(W)}$$\\
$$&\sim& dist_E(.,\tilde V_f)|_W$$\\
$$&=&dist_E(.,\{y_3=y_4=0\})|_W$$\\
$$&=:& d_{\tilde V_f}(.)|_W.$$
\end{array}\end{displaymath}

Let $b\in \Phi(W)$ and set $\tilde b=\Phi^{-1}(b)$. Now set $B(\tilde b,R):=\{y\in\R^4:|y_i-\tilde b_i|\le R\}\subset W$, then $B_\Phi(b,R):=\Phi(B(\tilde b,R))$ is a closed neighborhood of $b$.

The following lemma is the key to prove the existence of the limits of the trajectories of $\nabla^hf.$

\begin{lem}\label{localfini} Let $f\in M_d\cap A_d\cap D_d\cap L_d$, we have the following.
\begin{enumerate}
\item[(I)] Let $\alpha:[t_1,t_2]\to B_\Phi(b,R)$ be a horizontal curve such that $\alpha(t_1)=b$ and $\alpha(t_2)\in\partial B_\Phi(b,R).$ Then $l_\delta(\alpha(t))>C_{b,R}>0$ where $C_{b,R}$ does not depend on $\alpha.$

\item[(II)] For each $\dis p\in \partial B_\Phi(a,\frac{R}{2})$ and for any horizontal curve $\dis\alpha:[t_1,t_2]\to B_\Phi(a,R)-\mathring{B}_\Phi(a,\frac{R}{2})$ such that $\alpha(t_1)=p$ and $\alpha(t_2)\in\partial B_\Phi(a,R),$ we have $l_\delta(\alpha(t))>C'_{a,R}>0$ where $C'_{a,R}$ does not depend on $\alpha.$
\end{enumerate}
 
\end{lem}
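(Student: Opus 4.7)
My plan is to work in the local parameterization $\Phi$ from Lemma \ref{diffeolocal}. On the compact chart $W$, one has $\tilde V_f = \Phi^{-1}(V_f) = \{y_3=y_4=0\}$ and $d_{V_f}(\Phi(y)) \sim r(y) := \sqrt{y_3^2+y_4^2}$. Writing $\Phi^*\Delta = \mathrm{span}(\tilde X_1, \tilde X_2)$ with $d\Phi(\tilde X_j) = X_j$, the explicit form of $\Phi$ gives $\tilde X_1|_{\tilde V_f} = \partial_{y_3}$ and $\tilde X_2|_{\tilde V_f} = \partial_{y_4}$. The key quantitative consequence, obtained from Hadamard's lemma, is that the $\partial_{y_1}$- and $\partial_{y_2}$-components of $\tilde X_1, \tilde X_2$ vanish on $\tilde V_f$ and hence are bounded by $C\,r(y)$ uniformly on $W$. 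This is the effective form of the transversality $T_a V_f \pitchfork \Delta_a$ available because $a \notin \Gamma_f$.

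Next I decompose a horizontal curve $\alpha = \Phi\circ y$ as $\dot y = u_1\tilde X_1 + u_2\tilde X_2$. Since $\{X_1,X_2\}$ is $g_{SR}$-orthonormal and $\Phi$ is bi-Lipschitz on $W$, one has $\|\dot\alpha\|_{g_{SR}} = \sqrt{u_1^2+u_2^2}$ and $\sqrt{u_1^2+u_2^2} \sim |\dot y|$, so
\[ l_\delta(\alpha) \sim \int_{t_1}^{t_2} r(y(t))\,|\dot y(t)|\,dt. \]
From this I will extract two lower bounds. (a)~For $i\in\{1,2\}$, the Hadamard estimate $|\dot y_i| \leq C\,r(y)\,|\dot y|$ gives
\[ l_\delta(\alpha) \gtrsim \int_{t_1}^{t_2} |\dot y_i(t)|\,dt \geq |y_i(t_2) - y_i(t_1)|. \]
(b)~For $j\in\{3,4\}$, using $r(y)\geq |y_j|$ and $|\dot y_j|\leq |\dot y|$ yields
\[ l_\delta(\alpha) \gtrsim \int r(y)\,|\dot y_j|\,dt \geq \int |y_j\dot y_j|\,dt \geq \tfrac12|y_j(t_2)^2 - y_j(t_1)^2|, \]
and in fact $l_\delta(\alpha) \gtrsim y_j(t_1)^2 + y_j(t_2)^2$ whenever $y_j$ changes sign between $t_1$ and $t_2$, since then the total variation of $y_j^2$ picks up both halves.

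I finish by a short case analysis. For (I), $\alpha(t_2)\in\partial B(\tilde b, R)$ forces $|y_i(t_2)-\tilde b_i| = R$ for some $i$. If $i\in\{1,2\}$ then (a) gives $l_\delta(\alpha)\gtrsim R$. If $i\in\{3,4\}$, either $y_i$ keeps sign, so $|y_i(t_2)+\tilde b_i| = |y_i(t_2)|+|\tilde b_i| \geq |y_i(t_2)-\tilde b_i| = R$ and consequently $|y_i(t_2)^2-\tilde b_i^2| = R\cdot|y_i(t_2)+\tilde b_i| \geq R^2$; or $y_i$ flips sign, so $|y_i(t_2)|+|\tilde b_i| = R$ and $y_i(t_2)^2+\tilde b_i^2 \geq R^2/2$. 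Either way (b) gives $l_\delta(\alpha)\gtrsim R^2$, so $l_\delta(\alpha) \geq C_{b,R} := c\min(R,R^2) > 0$ uniformly in $\alpha$. Part~(II) is the same argument applied to the endpoints $y(t_1), y(t_2)$ of $\alpha$ in place of $(\tilde b, y(t_2))$: the annulus condition gives $|y(t_2)-y(t_1)|\geq R/2$, hence $|y_i(t_2)-y_i(t_1)|\geq R/8$ for some $i$, and the same dichotomy yields the required constant $C'_{a,R}$.

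The only delicate point is the degeneracy of the metric $\delta$ on $V_f$: a priori, a horizontal curve could attempt to make its $\delta$-length arbitrarily small by hugging $V_f$. The Hadamard estimate $|\dot y_i|\lesssim r\,|\dot y|$ for $i\in\{1,2\}$ is exactly what prevents this — horizontal displacement tangential to $V_f$ is of the same order as the distance to $V_f$, so every such tangential displacement unavoidably contributes a definite amount of $\delta$-length.
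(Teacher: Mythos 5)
Your proof is correct, and while its engine is the same as the paper's, the implementation is different enough to be worth comparing. The crucial estimate in both arguments is the same: because $\tilde\Delta$ coincides with $\{0\}\times\R^2$ along $\tilde V_f=\{y_3=y_4=0\}$ and varies smoothly, the tangential part of a unit horizontal velocity is $O(d_{\tilde V_f})$, so tangential displacement costs a definite amount of $\delta$-length. The paper obtains this in its case (iii) by projecting $\tilde\alpha$ orthogonally onto $\tilde V_f$ and using that $y\mapsto\tilde\Delta_y$ is Lipschitz into the Grassmannian $\G(2,4)$, whereas you read it off from the frame components of $\tilde X_1,\tilde X_2$ via Hadamard's lemma; these are the same bound in different clothing. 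Where you genuinely diverge is in the remaining cases: the paper splits according to whether $b\in V_f$ and whether the exit face is a $y_3/y_4$-face, and in those cases runs a ``stay at distance $\ge r$ from $V_f$ for a while'' argument (its Claims \ref{rie} and \ref{far}), then appeals to an unexplained compactness to upgrade $l_\delta>0$ to a uniform constant $C_{b,R}$. Your estimate (b), $l_\delta\gtrsim\tfrac12\bigl|y_j(t_2)^2-y_j(t_1)^2\bigr|$ (with the two-sided improvement when $y_j$ crosses zero), replaces all of that with a single integration by parts and yields the explicit uniform constant $c\min(R,R^2)$ directly, which is cleaner and arguably fills a small gap in the paper's write-up. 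Two minor points of hygiene: the dichotomy in your final case analysis should be stated on the \emph{signs of the endpoints} $y_i(t_1),y_i(t_2)$ (continuity then forces a zero crossing in the opposite-sign case, which is what the enhanced form of (b) needs), and the Hadamard/mean-value bound $|(\tilde X_j)_i|\le C\,r(y)$ should be taken on the convex box $B(\tilde b,R)$ so that the segment to $(y_1,y_2,0,0)$ stays in the domain. Neither affects the validity of the argument.
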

\begin{proof} (I) By compactness, it is sufficient to prove that $l_\delta(\alpha(t))>0$ for such $\alpha(t)$. Set $q:=\alpha(t_2)$ and $\tilde q:=\Phi^{-1}(q)=(\tilde q_1,\tilde q_2,\tilde q_3,\tilde q_4)$. We have four cases to consider:
\begin{enumerate}
\item[(i)] $b\not\in V_f$;
\item[(ii)] $b\in V_f$ and $|\tilde q_3-\tilde b_3|=R$ or $|\tilde q_4-\tilde b_4|=R$;
\item[(iii)] $b\in V_f$, $|\tilde q_3-\tilde b_3|<R$ and $|\tilde q_4-\tilde b_4|<R$.
\end{enumerate}

\begin{cla}\label{rie} We have $l_g(\alpha(t))\ge cR$ where $c$ is a positive constant.
\end{cla}
\begin{proof} Since $\Phi$ is a diffeomorphism, it is bi-Lipschitz, so by Remark \ref{equiv}, we get
$$l_g(\alpha(t))\ge dist(b,q)\sim dist_E(b,q)\sim dist_E(\tilde b,\tilde q)\ge R.$$
\end{proof}

We need first the following claim whose proof follows easily from Claim \ref{rie} and the formulas (\ref{lg}), (\ref{ldelta}).
\begin{cla}\label{far} If $d_{V_f}(\alpha(t))\ge r>0$, then $l_\delta(\alpha(t))\ge r l_g(\alpha(t))\ge crR$.
\end{cla}

Now consider case (i). Set $2r:=d_{V_f}(b)>0$. If $d_{V_f}(\alpha(t))> r$ for all $t\in [t_1,t_2]$, then by Claim \ref{far}, it follows that $$l_\delta(\alpha(t))\ge crR>0.$$
 Otherwise, there exists $t_0\in(t_1,t_2]$ such that $d_{V_f}(\alpha(t_0))=r$ and $d_{V_f}(\alpha(t))> r$ for $t\in[t_1,t_0]$. Then
\begin{displaymath}\begin{array}{ccl}
$$l_\delta(\alpha(t))&\ge& l_\delta(\alpha(t)|_{[t_1,t_0]})$$\\
$$&\ge& rl_g(\alpha(t)|_{[t_1,t_0]})$$\\
$$&\ge& r dist(b,\alpha(t_0))$$\\
$$&\sim& r dist_E(b,\alpha(t_0))$$\\
$$&\ge& r^2>0$$
\end{array}\end{displaymath}

Next, for case (ii), by the assumptions, we have 
$$dist_E(\tilde q,\{y_3=y_4=0\}\ge R.$$
Hence $$R\le dist_E(\tilde q,\tilde V_f)\sim dist_E(q,V_f)=d_{V_f}(q)$$
which means that there is $r'>0$ such that $d_{V_f}(q)\ge 2r'$. By applying the arguments as in the case (i), it follows that $l_\delta(\alpha(t))>0.$

Let us consider the case (iii), since $q\in\partial B_\Phi(b,R)$, we have $\tilde q\in\partial B(\tilde b,R)$. So $|\tilde q_1-\tilde b_1|=R$ or $|\tilde q_2-\tilde b_2|=R$. Let $\tilde\alpha(t)=\Phi^{-1}(\alpha(t))$. For $y\in W$, set 
$$\tilde\Delta_y=d\Phi^{-1}(\Delta_{\Phi(y)}),$$ which defines a distribution $\tilde\Delta$ on $W$. Let $y\in W$ and $X,Y\in T_y\tilde\Delta_y$, the "pullbacks" $\tilde g_{SR}$ of $g_{SR}$ and $\tilde\delta_{SR}$ of $\delta_{SR}$ on $\tilde\Delta$ at $y$ are given respectively by
$$\tilde g_{SR}(X,Y):=g_{SR}(d_y\Phi(X),d_y\Phi(Y)),$$
$$\tilde \delta_{SR}(X,Y):=\delta_{SR}(d_y\Phi(X),d_y\Phi(Y))=d^2_{V_f}(\phi(y)).g_{SR}(d_y\Phi(X),d_y\Phi(Y)).$$
Since all metrics on $\R^2$ are locally equivalent, by compactness, we have $\tilde g_{SR}(X,Y)\sim g_E|_{\tilde\Delta}$.  Moreover $d_{V_f}\sim d_{\tilde V_f}$. So 
$$\tilde \delta_{SR}(X,Y)\sim d^2_{\tilde V_f}.g_E|_{\tilde\Delta}.$$
Denote by $l_{\tilde\delta}(\tilde\alpha(t))$ the length of $\tilde\alpha(t)$ with respect to the metric $\tilde\delta_{SR}$. Then by this observation, it is sufficient to prove that $l_{\tilde\delta}(\tilde\alpha(t))>0.$

Without loss of generality, we may assume that $\tilde\alpha(t)$ is parametrized by its arc length with respect to the metric $g_E$, i.e., $||\dot{\tilde\alpha}(t)||_E=1$ for $t\in[t_0,t_2]$, where $||.||_E$ denotes the Euclidean norm. Let $\xi(t)$ be the orthogonal projection of $\tilde\alpha(t)$ on $\tilde V_f$ with respect to the Euclidean metric $g_E$, so $\dot\xi(t)$ is the projection of $\dot{\tilde\alpha}(t)$ on $T_{\xi(t)}\tilde V_f=\R^2\times\{0\}$. Note that $dist_E(\xi(t_1),\xi(t_2))=\dist_E(\tilde b,(\tilde q_1,\tilde q_2,0,0))\ge R$. Denote by $\G(2,4)$ the Grassmannian space of $2-$planes in $\R^4$, the distance between two planes $P,Q\in\G(2,4)$ is given by
$$\dis\rho(P,Q):=\max_{p\in P,||p||=1}\min_{q\in Q}||p-q||_E.$$
Since the map $\tilde\Delta:\R^4\to\G(2,4), y\mapsto \tilde\Delta_y$ is smooth, it is locally Lipschitz, so we may suppose that for all $x,y\in W$, we have
$$\dis\rho(\tilde\Delta_x,\tilde\Delta_y)\le K||x-y||_E,$$
where $K$ is a positive constant. Hence for $t\in[t_0,t_2]$,
$$\dis\rho(\tilde\Delta_{\alpha(t)},\tilde\Delta_{\xi(t)})\le K||\alpha(t)-\xi(t)||_E.$$
Let $\dot\eta(t)$ be the orthogonal projection of $\dot{\tilde\alpha}(t)$ on $\tilde\Delta_{\xi(t)}\subset\{0\}\times\R^2$ with respect to the Euclidean metric $g_E$. Since $\tilde\Delta_{\xi(t)}\perp T_{\xi(t)}\tilde V_f$, we have $\dot{\tilde\alpha}(t)=\dot\xi(t)+\dot\eta(t)$. So
$$\dis||\dot\xi(t)||_E=||\dot{\tilde\alpha}(t)-\dot\eta(t)||_E\le\rho(\tilde\Delta_{\tilde\alpha(t)},\tilde\Delta_{\xi(t)})\le K||\tilde\alpha(t)-\xi(t)||_E\le Kd_{\tilde V_f}(\tilde\alpha(t)).$$
Consequently
\begin{displaymath}\begin{array}{ccl}
$$\l_\delta(\tilde\alpha)&=&\dis\int_{t_0}^{t_2}|\dot{\tilde\alpha}(t)|dt$$\\
$$&=&\dis\int_{t_0}^{t_2}d_{\tilde V_f}(\tilde\alpha(t))dt$$\\
$$&\ge&\dis\frac{1}{K}\int_{t_0}^{t_2}||\dot\xi(t)||_Edt$$\\
$$&\ge&\dis\frac{1}{K}dist_E(\xi(t_1),\xi(t_2))\ge R.$$
\end{array}\end{displaymath}
This end the proof of (I). It is clear that (II) follows easily from (I). The lemma follows.
\end{proof}

Now we have all the ingredients necessary to prove Theorem \ref{havelimit}.
\begin{proof}[Proof of Theorem \ref{havelimit}]
Let $f\in M_d\cap A_d\cap D_d\cap L_d$ and let $\alpha(t)\subset B$ be a trajectory of $f$ in $B$, assume that $\alpha(t)$ is parametrized by the values of $f$. If $\alpha(t)$ does not approach to $V_f$, then by the same arguments as Proposition 5.4, it follows that $\alpha(t)$ has a limit on $\partial B.$ So from now on, assume that $\alpha(t)$ approaches to $V_f$, i.e., there exists a sequence $t_m\to t_0<+\infty$ such that 
$$d_{V_f}(\alpha(t_m))\to 0.$$
Since $\alpha(t)$ is parametrized by the values of $f$, we have 
$$dist_E(\alpha(t),f^{-1}(t_0)\cap B)\to 0 \text{ when }t\to t_0,$$
where $dist_E$ denotes the distance function to $V_f$ with respect to the usual Euclidean metric on $\R^4$. Thus
\begin{equation}\label{tendtolvsf}dist_E(\alpha(t),V_f\cap f^{-1}(t_0)\cap B)\to 0 \text{ when }t\to t_0.\end{equation}
We need to prove that $\alpha(t)$ has only one accumulation point in $V_f\cap f^{-1}(t_0)\cap B$. By contradiction, assume that $\alpha(t)$ has two accumulation points $a,a'$ in $V_f\cap f^{-1}(t_0)\cap B$.

If $a\not\in\Gamma_f$, then there are a local diffeomorphism $\Phi$ and a "box" $B_{\Phi}(a,R)$ satisfying the conclusions such that Lemma \ref{localfini} holds. Then it is clear that for $R$ small enough, $\alpha(t)\cap\left(B_\Phi(a,R)-\mathring{B}_\Phi(a,\frac{R}{2})\right)$ contains infinity many components with one end point in $\partial B_\Phi(a,R)$ and another in $\dis \partial B_\Phi(a,\dis\frac{R}{2})$. According to Lemma \ref{localfini}, the length of these components with respect to the metric $\delta_{SR}$ is bounded from below by a positive constant, which implies that $l_\delta(\alpha(t))=+\infty$. This contradicts Lemma \ref{finitelength}.

If $a'\in \Gamma_f$, by the same arguments, we get a contradiction. So suppose that $a,a'\in \Gamma_f$. By Theorem \ref{noncontient} and Lemma \ref{conhori}, the intersection $f^{-1}(t_0)\cap \Gamma_f$ is finite, so let $R$ be small enough such that $B_{\Phi}(a,R)\cap f^{-1}(t_0)\cap \Gamma_f=\{a\}$. Since $\alpha(t)$ accumulates to $a$, it intersects $\dis \partial B_{\Phi}(a,R)$ infinitely many times. By compactness, the set $\dis \partial B_{\Phi}(a,R)\cap\alpha(t)$ has an accumulation point, denoted by $a_1$. By (\ref{tendtolvsf}), it is clear that $a_1\in f^{-1}(t_0)$. Hence $a_1\not\in\Gamma_f$. By the same arguments as above, we get again a contradiction. This ends the proof of Theorem \ref{havelimit}.
\end{proof}



\begin{thebibliography}{99}
\addcontentsline{toc}{chapter}{Bibliographie}
\bibitem{AS}{A. A. Ardentov \&  Yu. L. Sachkov,} {\it Conjugate points in nilpotent sub-Riemannian problem on the Engel group}, 
Journal of Mathematical Sciences
{\bf 195} (2013) no. 3,  369-390.
\bibitem{BHT}{Z. M. Balogh, I. Holopainen \& J. T. Tyson, {\it Singular solutions, homogeneous norms, and quasiconformal mappings in Carnot groups}, Math. Ann. {\bf 324} (2002), 159-186.}
\bibitem{Bel}{A. Bella\"iche, {\it The tangent space in sub-riemannian geometry}, Progress in Mathematics Vol.144, Birkh\"auser Verlag, 1996.}
\bibitem{BR}{R. Benedetti \& J-J. Risler, {\it Real algebraic and semi-algebraic sets}, Hermann, 1991.}
\bibitem{BCR}{J. Bochnak, M. Coste \& M-F. Roy,  {\it G\'eom\'etrie alg\'ebrique r\'eelle}, Springer, 1987.}
\bibitem{Cho}{W.L. Chow, {\it \"Uber Systeme von linearen partiellen Differentialgleichungen erster Ordnung}, Math. Ann. {\bf 117} (1939), 98-105.}
\bibitem{Cos}{M. Coste,  {\it An Introduction to semialgebraic geometry}, Dip. Mat. Univ. Pisa, Dottorato di Ricerca in Matematica, Istituti Editoriali e Poligrafici Internazionali, Pisa, 2000. http://perso.univ-rennes1.fr/michel.coste/articles.html.}
\bibitem{DKO}{S. T. Dinh, K. Kurdyka \& P. Orro, {\it Gradient horizontal de fonctions polynomiales}, Annales de l'Institut Fourier {\bf 59} (2009) no. 5, 1999-2042.}
\bibitem{Ehr}{C. Ehresmann, {\it Les connexions infinit\'esimales dans un espace fibr\'e diff\'erentiable}, Colloque de Topologie, Bruxelles (1950), 29-55.}
\bibitem{Eng}{F. Engel, {\it Zur Invariantentheorie der Systeme Pfaff'scher Gleichungen}, Leipz. Ber. Band {\bf 41} (1889), 157-176.}
\bibitem{Ger}{V. Gershkovich, {\it Exotic Engel structures on $\R^4$},  Russian J. Math. Phys. {\bf 3} (1995),  no. 2, 207-226.}
\bibitem{GM}{M. Goresky \& R.MacPherson,  {\it Stratified Morse theory}, Springer, 1988.}
\bibitem{Gro}{M. Gromov: {\it Carnot-Caratheodory spaces seen from within}, IHES, 1994.}
\bibitem{GP}{V. Guillemin \& A. Pollack, {\it Differential topology}, Prentice-Hall, 1974.}
\bibitem{Har}{R. M. Hardt: {\it Semi-algebraic local-triviality in semi-algebraic mappings}, Amer. J. Math. 102 (1980), no. 2, 291-302.}
\bibitem{Hir}{M. Hirsch: {\it Differential topology}, Springer, 1976.}
\bibitem{Jea}{F. Jean: {\it Entropy and complexity of a path in sub-riemannian geometry}, ENSTA, 1999.}
\bibitem{KMS}{I. Kol\'ar, P. W. Michor \& J. Slov\'ak, {\it Natural operations in differential geometry}. Springer-Verlag, Berlin, 1993.}
\bibitem{KOS}{K. Kurdyka, P. Orro \& S. Simon, {\it Semialgebraic Sard theorem for generalized critical values}, J. Diff. l Geom.  {\bf 56}  (2000),  no. 1, 67-92.}
\bibitem{Loj2}{S. {\L}ojasiewicz, {\it Ensembles semi-analytiques}, I.H.E.S, Bures-sur-Yvette, 1965.}
\bibitem{Loj3}{S. {\L}ojasiewicz, {\it Sur la g\'eom\'etrie semi- et sous- analytique}, Annales de l'institut Fourier. {\bf 43} (1993) no 5, 1575-1595.}
\bibitem{Loj} S. \L ojasiewicz, {\it Sur le probl\`eme de la division}, Studia Math. {\bf 18}, (1959), 87-136.
\bibitem{Loj4}{S. {\L}ojasiewicz, {\it Sur les trajectoires du gradient d'une fonction analytique},  Seminari di Geometria (Bologna), (1982-1983),  115-117.}
\bibitem{Mag}{V. Magnani, {\it A Blow-up theorem for regular hypersurfaces on nilpotent groups}, Manuscripta Math. {\bf 110} (2003), no. 1, 55-76.} 
\bibitem{Mil}{J. Milnor, {\it On the Betti numbers of real varieties}. Proc. Amer. Math. Soc. {\bf 15}, (1964), 275-280.} 
\bibitem{Rab}{P.J. Rabier, {\it Ehresmann fibrations and Palais-Smale conditions for morphisms of Finsler manifolds}. Ann. of Math. (2) {\bf 146} (1997), no. 3, 647-691.} 
\bibitem{Stri}{R.S. Strichartz, {\it Sub-riemannian geometry}, J. Diff. Geom. {\bf 24} (1986). 221-263.}
\bibitem{Sus}{H.J. Sussmann, {\it Orbits of families of vector fields and integrability of distributions}, Trans. Amer. Math. Soc, {\bf 180} (1973), 171-188.}
\bibitem{Tho}{R. Thom, {\it Sur l'homologie des vari\'et\'es alg\'ebriques r\'eelles}. Differential and Combinatorial Topology (A Symposium in Honor of Marston Morse). Princeton Univ. Press, Princeton, N.J, (1965), 255-265}
\bibitem{Vog}{T. Vogel, {\it Existence of Engel structures}, Ann. of Math. (2) {\bf 169} (2009), no. 1, 79-137.}
\bibitem{Zhi}{M. Ya. Zhitomirskii, {\it Normal forms of germs of two-dimensional distributions on $\R^4$}, (Russian)  Funktsional. Anal. i Prilozhen, {\bf 24} (1990),  no. 2, 81-82;  translation in  Funct. Anal. Appl.  {\bf 24} (1990),  no. 2, 150-152.}

\end{thebibliography}
\end{document}